\newcommand\version{June 16, 2015}
\newtheorem{theorem}{Theorem}[section]
\newtheorem{proposition}[theorem]{Proposition}
\newtheorem{lemma}[theorem]{Lemma}
\newtheorem{corollary}[theorem]{Corollary}
\theoremstyle{definition}
\theoremstyle{remark}
\newtheorem{remark}[theorem]{Remark}
\numberwithin{equation}{section}
\newcommand{\C}{\mathbb{C}}
\renewcommand{\epsilon}{\varepsilon}
\newcommand{\N}{\mathbb{N}}
\renewcommand{\phi}{\varphi}
\newcommand{\R}{\mathbb{R}}
\newcommand{\Sph}{\mathbb{S}}
\newcommand{\Z}{\mathbb{Z}}
\DeclareMathOperator{\im}{Im}
\DeclareMathOperator{\sech}{sech}
\DeclareMathOperator{\sgn}{sgn}
\begin{document}

\title[Schr\"odinger operators with complex potentials --- \version]{Eigenvalue bounds for Schr\"odinger operators\\ with complex potentials. II}

\author{Rupert L. Frank}
\address{Rupert L. Frank, Mathematics 253-37, Caltech, Pasadena, CA 91125, USA}
\email{rlfrank@caltech.edu}

\author{Barry Simon}
\address{Barry Simon, Mathematics 253-37, Caltech, Pasadena, CA 91125, USA}
\email{bsimon@caltech.edu}

\begin{abstract}
Laptev and Safronov conjectured that any non-positive eigenvalue of a Schr\"odinger operator $-\Delta+V$ in $L^2(\R^\nu)$ with complex potential has absolute value at most a constant times $\|V\|_{\gamma+\nu/2}^{(\gamma+\nu/2)/\gamma}$ for $0<\gamma\leq\nu/2$ in dimension $\nu\geq 2$. We prove this conjecture for radial potentials if $0<\gamma<\nu/2$ and we `almost disprove' it for general potentials if $1/2<\gamma<\nu/2$. In addition, we prove various bounds that hold, in particular, for positive eigenvalues.
\end{abstract}

\renewcommand{\thefootnote}{${}$} \footnotetext{\copyright\, 2015 by the authors. This paper may be reproduced, in its entirety, for non-commercial purposes.\\
Work partially supported by U.S. National Science Foundation grants PHY-1347399, DMS-1363432 (R.L.F.) and DMS-1265592 (B.S.).}

\maketitle

\section{Introduction and main results}

In this paper we are interested in eigenvalues of Schr\"odinger operators
$$
-\Delta +V \qquad\text{in}\ L^2(\R^\nu)
$$
with (possibly) complex-valued potentials $V$. More precisely, we want to derive bounds on the location of these eigenvalues assuming only that $V$ belongs to some $L^p(\R^\nu)$ with $p<\infty$. This assumption, for suitable $p$, will also guarantee that $-\Delta+V$ can be defined via the theory of $m$-sectorial forms. Also, $p<\infty$ implies that eigenvalues outside of $[0,\infty)$ are discrete and have finite algebraic multiplicities.

If $V$ is real-valued (so that discrete eigenvalues are negative), it is a straightforward consequence of Sobolev inequalities that
\begin{equation}
\label{eq:keller}
|E|^\gamma \leq C_{\gamma,\nu} \int_{\R^\nu} |V|^{\gamma+\nu/2}\,dx
\end{equation}
for every $\gamma\geq 1/2$ if $\nu=1$ and every $\gamma>0$ if $\nu\geq 2$. Here $C_{\gamma,\nu}$ is a constant independent of $V$. For this bound, see \cite{K,LT} and also \cite{CFL} for optimal constants, optimal potentials and stability results.

The question becomes much more difficult if $V$ is allowed to be complex-valued. Laptev and Safronov \cite{LS} conjectured that for any $\nu\geq 2$ and $0<\gamma\leq\nu/2$ there is a $C_{\gamma,\nu}$ such that \eqref{eq:keller} holds for all eigenvalues $E\in\C\setminus[0,\infty)$. Prior to their conjecture, Abramov, Aslanyan and Davies \cite{AAD} (see also \cite{DN}) had shown this for $\nu=1$ and $\gamma=1/2$. In \cite{F} the Laptev--Safronov conjecture was proved for $\nu\geq 2$ and $0<\gamma\leq1/2$.

\medskip

In this paper we accomplish the following:
\begin{enumerate}
\item[(A)] We almost disprove the Laptev--Safronov conjecture for $\nu\geq 2$ and $1/2<\gamma<\nu/2$ (Theorem \ref{ij}).
\item[(B)] We prove the Laptev--Safronov conjecture for \emph{radial} potentials for $\nu\geq 2$ and $1/2<\gamma<\nu/2$.
\item[(C)] We give a simple proof that for $0<\gamma\leq 1/2$ the bound \eqref{eq:keller} holds also for eigenvalues $E\in [0,\infty)$. (We note that a deep result of Koch--Tataru \cite{KT} shows that, in fact, there are no positive eigenvalues.)
\item[(D)] We prove an eigenvalue bound for $V\in L^{\gamma_1+\nu/2}(\R^\nu) + L^{\gamma_2+\nu/2}(\R^\nu)$ with $0<\gamma_1<\gamma_2\leq 1/2$ if $\nu=2$ and $0\leq \gamma_1<\gamma_2\leq 1/2$ if $\nu\geq 3$.
\end{enumerate}
By `almost disprove' in (A) we mean we construct a sequence of real-valued potentials $V_n$ such that $-\Delta +V_n$ has eigenvalue $1$ but $\|V_n\|_p\to 0$ for any $p>(1+\nu)/2$. If Laptev and Safronov had formulated their conjecture for any eigenvalue $E\in\C$ (and not only for $E\in\C\setminus[0,\infty)$), we would have disproved it. In particular, this is interesting in view of (C), where we prove that for $0<\gamma\leq1/2$ the conjecture holds in fact also for eigenvalues in $[0,\infty)$. Note that if we were able to show that the eigenvalue $1$ of $-\Delta+V_n$ becomes a non-real eigenvalue of $-\Delta+V_n+\epsilon W$ for some nice $W$ (say with $\im W\geq 0$) and $\epsilon$ small, we could also disprove the conjecture.

Our construction of the potentials $V_n$ in the proof of Theorem \ref{ij} is inspired by a construction of Ionescu and Jerison \cite{IJ}. Using ideas of Wigner and von Neumann \cite{WvN} (see also \cite[Section XIII.13]{RS4}) we are able to simplify their construction. 

We also prove (Theorem \ref{wvn}) that a bound of the form \eqref{eq:keller} cannot hold, even for radial potentials, if $\gamma>\nu/2$. Of course, Laptev and Safronov conjectured such a bound only for $\gamma<\nu/2$, but the fact that this is the correct upper bound is not obvious. Our construction extends the Wigner--von Neumann construction \cite{WvN} (see also \cite{RS4}) to arbitrary dimension $\nu$, which is interesting in its own right. Our counterexamples are constructed in Section \ref{sec:negative}. In passing we mention that while the Wigner--von Neumann example has been studied extensively, we are not aware of similar results about the Ionescu--Jerison example. It would be interesting to extend the results of Naboko \cite{N} and Simon \cite{Si} on dense embedded point spectrum based on the Wigner--von Neumann example to instead use the Ionescu--Jerison example.

\medskip

Concerning (B), we recall that the proof in \cite{F} of \eqref{eq:keller} for $0<\gamma\leq 1/2$ relied on uniform Sobolev bounds due to Kenig--Ruiz--Sogge \cite{KRS}, namely,
\begin{equation}
\label{eq:keruso}
\| (-\Delta- z)^{-1} f\|_{p'} \leq C |z|^{-\nu/2+\nu/p-1} \|f\|_{p} \,,
\qquad 2\nu/(\nu+2)< p\leq 2(\nu+1)/(\nu+3) \,,
\end{equation}
with $C$ independent of $z$ and with $p'=p/(p-1)$. (In \cite{KRS} this bound is only proved for $\nu\geq 3$, but the same argument works for $\nu=2$ as well, see \cite{F}.) The range of exponents $2\nu/(\nu+2)<p\leq 2(\nu+1)/(\nu+3)$ in \eqref{eq:keruso} corresponds to $0<\gamma\leq 1/2$ in \eqref{eq:keller}. Bounds of the form \eqref{eq:keruso} cannot hold for exponents $2(\nu+1)/(\nu+3)<p<2\nu/(\nu+1)$ (corresponding to $1/2<\gamma<\nu/2$). However, as we shall show (Theorem \ref{resmixed}), they do hold if one replaces the space $L^p(\R^\nu)$ by $L^p(\R_+,r^{\nu-1}\,dr; L^2(\Sph^{\nu-1}))$ and similarly for $L^{p'}(\R^\nu)$. In fact, these bounds prove \eqref{eq:keller} not only for radial potentials, but for general potentials in $L^{\gamma+\nu/2}(\R_+,r^{\nu-1}\,dr; L^\infty(\Sph^{\nu-1}))$ with the obvious replacement on the right side; see Theorem \ref{evbound}. We also prove a Lorentz space result at the endpoint $\gamma=\nu/2$; see Theorem \ref{endpoint}.

Our results for $1/2<\gamma\leq\nu/2$ are based on arguments by Barcelo, Ruiz and Vega \cite{BRV} and, in particular, precise bounds on Bessel functions. This is further discussed in Section~\ref{sec:positive} and in the appendix.

\medskip

We prove (C) in Section \ref{sec:fagain}. Our argument is based on \eqref{eq:keruso}, like that in \cite{F}, but is more direct and avoids Birman--Schwinger operators. As we mentioned above, the deep results of Koch and Tataru \cite{KT} imply that $-\Delta+V$ has no positive eigenvalues if $V\in L^{\gamma+\nu/2}(\R^\nu)$ with $0<\gamma<1/2$; see also \cite{IJ} for the case $\gamma=0$ in dimensions $\nu\geq 3$. (The fact that the results of \cite{KT} apply also to complex-valued potentials is not emphasized there, but is clear from their proof strategy via Carleman inequalities. Also, the fact that $V\in L^{\gamma+\nu/2}(\R^\nu)$ satisfies Assumption A.2 in \cite{KT} for $\gamma$ as above can be easily verified using Sobolev embedding theorems; see, for instance, the proof of Lemma 3.5 in \cite{FP}.)

We include our proof of (C) since it is much simpler than the arguments in \cite{IJ,KT} and since the same reasoning will give the assertion in (B) for $E\in [0,\infty)$ where the results of \cite{KT} are not applicable.

\medskip

The bounds mentioned in (D), see Theorem \ref{lpsum}, are new, even for $E\in\C\setminus[0,\infty)$. They are also derived from \eqref{eq:keruso}. Somewhat related bound in $\nu=1$ are contained in \cite{DN}.

\medskip

In this paper we have only discussed bounds on single eigenvalues. The situation for sums of eigenvalues is less understood and we refer to \cite{FLLS,LS,BGK,DHK,FS} and references therein for results and open questions in this direction. Also, we emphasize that we work only under an $L^p$ condition on $V$. In contrast, results under exponential decay assumptions are classical (see, e.g., \cite{Na,Ma1,Ma2} and also \cite{St1,St2}) and extensions to sub-exponential decay were studied in a remarkable series of papers of Pavlov \cite{Pa1,Pa2,Pa3}. For results in the discrete, one-dimensional case we refer, for instance, to \cite{EgGo1,EgGo2}.

\subsection*{Acknowledgemnts}
The authors would like to thank L. Golinskii, H. Koch, A. Laptev, O. Safronov and D. Tataru for helpful corresondence.


\section{Counterexamples}\label{sec:negative}

The following theorem shows, in particular, that the bound \eqref{eq:keller} cannot be valid for positive eigenvalues of Schr\"odinger operators with real potentials if $\nu\geq 2$ and $\gamma>(\nu+1)/2$. Our proof simplifies the construction of potentials that appeared in \cite{IJ} in a different, but related context.

\begin{theorem}\label{ij}
For any $\nu\geq 2$ there is a sequence of potentials $V_n:\R^\nu\to\R$, $n\in\N$, such that $1$ is an eigenvalue of $-\Delta+V$ in $L^2(\R^\nu)$ and
$$
|V_n(x)| \leq \frac{C}{n + |x_1| + |x'|^2} \,,
\qquad x=(x_1,x')\in\R\times\R^{\nu-1} \,,
$$
with $C>0$ independent of $n$. In particular, for any $p>(\nu+1)/2$,
$$
\|V_n\|_{L^p} \to 0
\qquad\text{as}\ n\to\infty \,.
$$
\end{theorem}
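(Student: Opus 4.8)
The plan is to find an explicit potential supported by a single spherical harmonic so that the eigenvalue equation reduces to an ODE in the radial variable, following the Wigner--von Neumann philosophy: prescribe a decaying oscillatory eigenfunction and solve for the potential it forces. Concretely, I would look for an eigenfunction of the form $\psi(x) = f(|x|)\, Y(x/|x|)$, or better, motivated by the anisotropic bound $C/(n+|x_1|+|x'|^2)$, an eigenfunction adapted to parabolic coordinates. The key observation is that for $-\Delta + V$ to have eigenvalue $1$ it suffices to take $V = (\Delta\psi + \psi)/\psi$ for any chosen $\psi \in L^2$, and the whole game is to choose $\psi$ so that (i) $\psi \in L^2(\R^\nu)$, (ii) $\psi$ does not vanish (so $V$ is nonsingular) or its zeros are mild enough, and (iii) the resulting $V$ obeys the claimed pointwise bound with constant uniform in $n$.

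First I would write down the ansatz. Take $\psi_n(x) = g_n(x_1)\, h_n(x')$ or, cleaner, mimic the Ionescu--Jerison / Wigner--von Neumann trick of modulating a plane wave: set $\psi_n(x) = a_n(x)\, \sin(\phi_n(x))$ where $\phi_n$ is a phase close to $x_1$ (so $e^{i\phi_n}$ nearly solves $-\Delta u = u$) and $a_n$ is a slowly varying amplitude chosen so that $\psi_n \in L^2$. The point of the parabolic profile $n + |x_1| + |x'|^2$ is that along the paraboloid $x_1 \sim |x'|^2$ the ``effective'' decay is slow; one builds $a_n$ to decay like $(n + |x_1| + |x'|^2)^{-\alpha}$ for a suitable power $\alpha$ making $a_n^2$ integrable in $\R^\nu$ — this is where $\nu \geq 2$ and the scaling exponent $(\nu+1)/2$ will enter. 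Plugging $\psi_n$ into $-\Delta + V = 1$ and solving for $V$, the leading ``$-1$'' from $-\Delta(\sin\phi_n)$ cancels against the eigenvalue, and $V_n$ is expressed through first and second derivatives of $a_n$ and $\phi_n$ divided by $a_n$ and $\sin\phi_n$. The division by $\sin\phi_n$ is the usual Wigner--von Neumann annoyance: one either accepts that $V_n$ blows up near the zeros of $\sin\phi_n$ and fixes it by inserting a correction (e.g. replacing $\sin\phi_n$ by something bounded below, or by a standard regularization that changes $\psi_n$ slightly on a neighborhood of its nodal set at the cost of a controlled addition to $V_n$), or one chooses the phase more cleverly. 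Then one checks, by direct differentiation, that every term in $V_n$ is $O\big((n + |x_1| + |x'|^2)^{-1}\big)$ uniformly in $n$: the amplitude derivatives contribute factors with an extra power of the weight in the denominator, and the uniformity in $n$ comes from the fact that $n$ only shifts the profile, not its shape.

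The final $L^p$ statement is then a routine computation: $\int (n + |x_1| + |x'|^2)^{-p}\, dx$ over $\R \times \R^{\nu-1}$ converges precisely for $p > 1 + (\nu-1)/2 = (\nu+1)/2$ — integrating first in $x'$ gives a factor $(n + |x_1|)^{-p + (\nu-1)/2}$ up to constants, and then integrating in $x_1$ needs $p - (\nu-1)/2 > 1$ — and by scaling in $n$ this integral tends to $0$ as $n \to \infty$ when $p > (\nu+1)/2$. So once the pointwise bound on $V_n$ is established, the conclusion $\|V_n\|_{L^p}\to 0$ follows immediately.

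I expect the main obstacle to be the simultaneous reconciliation of three competing demands: making $\psi_n$ genuinely $L^2$ (which forces the amplitude to decay and hence bounds how slowly $V_n$ can decay from below), keeping $V_n$ bounded (handling the nodal set of the oscillatory factor — this is the technically fussiest part and is presumably where the simplification over \cite{IJ} lies, using the \cite{WvN} idea of an amplitude that is itself engineered to kill the singularity), and getting the \emph{anisotropic} weight $n + |x_1| + |x'|^2$ rather than an isotropic one — this last point is what makes the construction beat every $p > (\nu+1)/2$ rather than merely some larger exponent, and it dictates the parabolic geometry of the phase $\phi_n$. Verifying the pointwise bound term-by-term after the substitution, while elementary, requires care to see that no term decays more slowly than the claimed rate and that all constants are $n$-independent.
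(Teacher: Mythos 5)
Your overall strategy coincides with the paper's (eigenfunction $\psi_n = w_n \sin$(phase), amplitude decaying like $(n+|x_1|+|x'|^2)^{-\alpha}$, and the final scaling computation for $\|V_n\|_{L^p}$ is exactly right), but the proposal stops short of the one step on which the whole construction hinges: how to keep $V_n$ bounded across the nodal set of the oscillatory factor. You name this as ``the technically fussiest part'' and offer two fallbacks, neither of which works. If you replace $\sin\phi_n$ by something bounded below, the factor no longer satisfies $-\Delta u = u$ and the leading term does not cancel, so $V_n$ picks up an $O(1)$ non-decaying piece. If instead you keep $\psi = w\sin x_1$ with a naive amplitude such as $w=(n^2+x_1^2+|x'|^4)^{-\alpha}$, then $V$ contains the term $2(\partial_1 w/w)\cot x_1$, which near each hyperplane $x_1=k\pi$ blows up like $\mathrm{dist}(x_1,\pi\Z)^{-1}$; this violates the claimed pointwise bound and is not even locally $L^p$ for $p\geq 1$, and ``regularizing'' $\psi_n$ near infinitely many nodal hyperplanes gives no visible control of the resulting correction to $V_n$. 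So as written the proof has a genuine gap precisely at its core.

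The paper's resolution (the Wigner--von Neumann device you allude to but do not implement) is to make the amplitude a function of a primitive of $\sin^2$: set $g(x_1)=4\int_0^{x_1}\sin^2 y\,dy = 2x_1-\sin(2x_1)$ and
$$
w_n(x) = \bigl(n^2+g(x_1)^2+|x'|^4\bigr)^{-\alpha}, \qquad \alpha>\nu/4 ,
$$
with the phase kept exactly equal to $x_1$ (no parabolic phase is needed; the anisotropy lives entirely in the $|x'|^4$ term of the amplitude). Then $\partial_1 w_n$ carries the factor $g'=4\sin^2 x_1$, so the dangerous term involves $g'\cot x_1 = 4\sin x_1\cos x_1$, which is bounded, and a direct computation shows every term of $V_n$ is $O\bigl((n^2+g^2+|x'|^4)^{-1/2}\bigr)$ uniformly in $n\geq 1$; since $g(x_1)\sim 2x_1$, this is equivalent to the stated bound $C/(n+|x_1|+|x'|^2)$, and $\alpha>\nu/4$ gives $\psi_n\in L^2$. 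With this ingredient supplied, the rest of your outline (term-by-term verification and the $L^p$ scaling argument with threshold $(\nu+1)/2$) goes through as you describe.
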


\begin{proof}
We look for an eigenfunction of the form $\psi(x) = w(x) \sin x_1$. Then
$$
-\Delta\psi = \psi - 2(\partial_x w) \cos x_1 - (\Delta w) \sin x_1 \,,
$$
so the eigenvalue equation will be satisfied if we set
$$
V := 2 \frac{\partial_1 w}{w} \cot x_1 + \frac{\Delta w}{w} \,.
$$
We need to choose $w$ in such a way that $\psi\in L^2$ and that $V$ satisfies the required bounds. In particular, $\partial_1 w$ needs to vanish where $\sin x_1$ does. In order to achieve this, we set
$$
g(x_1) := 4 \int_0^{x_1} \sin^2 y \,dy = 2 x_1 - \sin (2x_1)
$$
and
$$
w_n(x) := \left( n^2 + g(x_1)^2 + |x'|^4\right)^{-\alpha} \,. 
$$
The potential $V_n$ is defined with $w_n$ in place of $w$. The parameter $n$ here is not necessarily an integer, but we do require later that $n\geq 1$. Finally, the parameter $\alpha$ will be chosen so that $w\in L^2(\R^\nu)$ (which implies $\psi\in L^2(\R^\nu)$). Note that
$$
\int_{\R^\nu} |w_n(x)|^2 \,dx = 2 |\Sph^{\nu-2}| \int_0^\infty (n^2 + g(x_1)^2)^{-2\alpha+(\nu-1)/2} \,dx_1 \int_0^\infty \frac{r^{\nu-2}\,dr}{(1+r^4)^{2\alpha}}
$$
is finite provided $\alpha>\nu/4$, which we assume in the following. We do not keep track of the dependence of our estimates on $\alpha$.

A quick computation shows that
$$
V_n = - \frac{4\alpha}{m_n} gg' \cot x_1 + \frac{4\alpha(\alpha+1)}{m_n^2} \left( g^2 (g')^2 + 4 |x'|^6 \right) - \frac{2\alpha}{m_n} \left( (g')^2 + gg'' + 2(\nu+1)|x'|^2 \right)
$$
with $m_n(x) := n^2 + g(x_1)^2 + |x'|^4$. Note that $g' \cot x_1= 4\sin x_1 \cos x_1$ is bounded. Moreover, $|g|, |x'|^2 \leq m_n^{1/2}$ and $|g'|, |g''|\leq C$, so
$$
|V_n| \leq C \left( m_n^{-1/2} + m_n^{-1} \right) \,.
$$
Using $n\geq 1$, we find $m_n^{-1} \leq n^{-1} m_n^{-1/2}\leq m_n^{-1/2}$, so $|V_n|\leq C m_n^{-1/2}$. This bound is equivalent to the one stated in the theorem.

Finally, we note that by scaling
$$
\int_{\R^\nu} |V_n|^p \,dx \leq C \int_{\R^\nu} \frac{dx}{(n + |x_1|+|x'|^2)^p} = n^{-p+ (\nu+1)/2} C \int_{\R^\nu} \frac{dx}{(1 + |x_1|+|x'|^2)^p} 
$$
For $p>(\nu+1)/2$, the right side tends to  zero since $(1+|x_1|+|x'|^2)^{-1}\in L^p$ in this case. This finishes the proof of the theorem.
\end{proof}

We emphasize that the eigenfunctions corresponding to the eigenvalue 1 of $-\Delta+V_n$ can have arbitrarily fast or slow (consistent with being square-integrable) algebraic decay in $|x_1|+|x'|^2$. We also note that (for fixed $n$) the potential $V_n$ has the asymptotic behavior
\begin{align*}
V_n(x) = & - \frac{16\alpha x_1 \sin^2 (2x_1)}{4 |x_1|^2+|x'|^4} + \frac{16\alpha(\alpha+1)|x'|^6}{(4 |x_1|^2+|x'|^4)^2} - \frac{4\alpha ( 4x_1 \cos(2x_1) + (\nu+1)|x'|^2)}{4 |x_1|^2+|x'|^4} \\
& + O((|x_1|+|x'|^2)^{-2})
\end{align*}
as $|x_1|+|x'|^2\to\infty$.

Our next theorem shows, in particular, that the bound \eqref{eq:keller} cannot be valid for positive eigenvalues of Schr\"odinger operators with real, radial potentials if $\nu\geq 1$ and $\gamma>1/2$. Our proof extends the Wigner--von Neumann construction \cite{WvN} (see also \cite{RS4}) to arbitrary dimensions $\nu\geq 1$.

\begin{theorem}\label{wvn}
For any $\nu\geq 1$ there is a sequence of radial potentials $V_n:\R^\nu\to\R$, $n\in\N$, such that $1$ is an eigenvalue of $-\Delta+V$ in $L^2(\R^\nu)$ and
$$
|V_n(x)| \leq \frac{C}{n + |x|} \,,
\qquad x\in\R^\nu \,,
$$
with $C>0$ independent of $n$. In particular, for any $p>\nu$,
$$
\|V_n\|_{L^p} \to 0
\qquad\text{as}\ n\to\infty \,.
$$
\end{theorem}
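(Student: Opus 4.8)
The plan is to follow the proof of Theorem~\ref{ij}, replacing the one-dimensional free solution $\sin x_1$ by the radial solution of the free equation in $\R^\nu$ that is regular at the origin. Let $\phi_0$ denote this solution, normalized somehow; up to a constant $\phi_0(x)=|x|^{-(\nu-2)/2}J_{(\nu-2)/2}(|x|)$, which equals $\cos|x|$ for $\nu=1$ and $|x|^{-1}\sin|x|$ for $\nu=3$, and which is a real-analytic function of $|x|^2$ on $\R^\nu$. We look for an eigenfunction $\psi(x)=w(|x|)\,\phi_0(|x|)$ with $w$ radial. Since $\Delta(w\phi_0)=(\Delta w)\,\phi_0+2\,w'\phi_0'+w\,\Delta\phi_0$ and $\Delta\phi_0=-\phi_0$, the equation $(-\Delta+V)\psi=\psi$ holds precisely when
\begin{equation*}
V=\frac{\Delta w}{w}+2\,\frac{w'}{w}\,\frac{\phi_0'}{\phi_0} \,,\qquad \Delta w=w''+\frac{\nu-1}{r}\,w' \,.
\end{equation*}
As in Theorem~\ref{ij}, the only potential singularities come from $1/\phi_0$ at the (infinitely many) zeros of $\phi_0$, so we must force $w'$ to vanish at each of them.

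To arrange this, set
\begin{equation*}
g(r):=\int_0^r \phi_0(t)^2\,t^{\nu-1}\,dt \,,\qquad w_n(r):=\bigl(n^2+g(r)^2\bigr)^{-\alpha} \,,
\end{equation*}
with $n\ge 1$ and $\alpha>1/4$ fixed (say $\alpha=1$), and define $V_n$ by the formula above and $\psi_n(x):=w_n(|x|)\,\phi_0(|x|)$. Then $g'=\phi_0^2\,r^{\nu-1}\ge 0$ vanishes exactly at the zeros of $\phi_0$, hence so does $w_n'=-2\alpha\,g\,g'\,(n^2+g^2)^{-\alpha-1}$; in fact $w_n'/\phi_0=-2\alpha\,g\,\phi_0\,r^{\nu-1}(n^2+g^2)^{-\alpha-1}$, so $V_n$ is a bounded smooth real radial potential and $\psi_n$ is smooth on $\R^\nu$ (one checks $g^2$ is real-analytic in $r^2$ near the origin, since $g'$ has definite parity). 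The input we need about $\phi_0$ is the classical large-argument behaviour of Bessel functions: $|\phi_0(r)|+|\phi_0'(r)|\le C\,r^{-(\nu-1)/2}$ for $r\ge 1$ (with $\phi_0,\phi_0'$ bounded for $r\le 1$), together with the fact that $\phi_0(r)^2\,r^{\nu-1}$ has strictly positive mean over periods, so that $g(r)\ge c\,r$ for $r\ge R_0$, with $c,R_0>0$ independent of $n$. Writing $m_n(r):=n^2+g(r)^2$, these give $\phi_0^2 r^{\nu-1}\le C$, $|\phi_0\phi_0'|\,r^{\nu-1}\le C$, $\phi_0^2 r^{\nu-2}\le C$ (for $\nu\ge2$), $|g''|\le C$, $|g|\le m_n^{1/2}$ and $m_n(r)\ge c\,(n+r)^2$; expanding $\Delta w_n/w_n$ and $(w_n'/w_n)(\phi_0'/\phi_0)$ in $g,g',g''$ and estimating term by term then yields
\begin{equation*}
|V_n(x)|\le C\,m_n(|x|)^{-1/2}\le \frac{C'}{\,n+|x|\,}
\end{equation*}
with $C'$ independent of $n$ (for $\nu=1$ the $\tfrac{\nu-1}{r}$ terms, and with them the factors $r^{\nu-2}$, are absent). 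Also $\psi_n\in L^2(\R^\nu)$ because $\int_0^\infty w_n^2\,\phi_0^2\,r^{\nu-1}\,dr\le C\int_0^\infty(n^2+g(r)^2)^{-2\alpha}\,dr<\infty$ once $\alpha>1/4$ (using $g(r)\gtrsim r$ at infinity), and since $V_n$ is bounded and real, $-\Delta+V_n$ is self-adjoint on $H^2(\R^\nu)$ while $\psi_n\in H^2(\R^\nu)$ (as $\Delta\psi_n=(1-V_n)\psi_n\in L^2$), so $\psi_n\neq 0$ is a genuine eigenfunction with eigenvalue $1$. Finally the $L^p$ statement is the scaling argument of Theorem~\ref{ij}: from the pointwise bound,
\begin{equation*}
\int_{\R^\nu}|V_n|^p\,dx\le (C')^p\int_{\R^\nu}(n+|x|)^{-p}\,dx=(C')^p\,|\Sph^{\nu-1}|\,n^{\nu-p}\int_0^\infty\frac{s^{\nu-1}}{(1+s)^p}\,ds\longrightarrow 0
\end{equation*}
as $n\to\infty$ for every $p>\nu$, the integral being finite exactly when $p>\nu$.

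The heart of the matter is the choice of $g$ with $g'=\phi_0^2 r^{\nu-1}$: it cancels the singularities of $V_n$ at the zeros of $\phi_0$ and, thanks to the non-degenerate oscillatory leading term of $J_{(\nu-2)/2}$, it grows linearly so that $g^2$ eventually dominates both $(g')^2$ and $n^2$ --- exactly the role played by $g(x_1)=2x_1-\sin 2x_1$ in Theorem~\ref{ij}. Accordingly, the step demanding the most care is assembling the Bessel asymptotics --- the $r^{-(\nu-1)/2}$ decay of $\phi_0$ and $\phi_0'$ and the lower bound $g(r)\gtrsim r$ from the non-vanishing amplitude of the leading term; the remainder (the extra $1/r$ terms from the radial Laplacian and the smoothness of $\psi_n$ at the origin) is routine bookkeeping, and I anticipate no genuine obstacle beyond organizing these standard facts.
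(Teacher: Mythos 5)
Your proposal is correct and follows essentially the same construction as the paper's proof: the ansatz $\psi_n=w_n\phi_0$ with $\phi_0(r)=r^{-(\nu-2)/2}J_{(\nu-2)/2}(r)$, the choice $g'=\phi_0^2r^{\nu-1}$ so that $w_n=(n^2+g^2)^{-\alpha}$ cancels the zeros of $\phi_0$, and the same Bessel asymptotics (boundedness of $g'$, $g''$, $r^{\nu-1}\phi_0\phi_0'$ and linear growth of $g$) giving $|V_n|\leq C(n^2+g^2)^{-1/2}\leq C'/(n+|x|)$ and the $L^p$ decay. The only (harmless) difference is that you treat $\nu=1$ directly with $\phi_0=\cos|x|$, whereas the paper reduces $\nu=1$ to the $\nu=3$ case by an odd extension of $r\psi_n$.
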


\begin{proof}
We first observe that we may assume $\nu\geq 2$. Indeed, for $\nu=1$ we simply extend $V_n$ from $\nu=3$ to an even function on $\R$. The proof below will show that the corresponding eigenfunction $\psi_n$ is radial and we can extend $r\psi_n$ to an odd function on $\R$ which will satisfy the correct equation.

Now let $\nu\geq 2$. We look for an eigenfunction of the form
$$
\psi(x) = \phi(r) w(r) \,, \qquad r=|x| \,,
$$
where $\phi$ is a radial function solving $-\Delta\phi=\phi$ in $\R^\nu$ (in particular, $\phi$ is regular at the origin). It is known that, up to a multiplicative constant, $\phi(r)=r^{-(\nu-2)/2} J_{(\nu-2)/2}(r)$, where $J_{(\nu-2)/2}$ is a Bessel function. This follows from Bessel's equation
$$
-J_{(\nu-2)/2}'' - r^{-1} J_{(\nu-2)/2}' + \left(\frac{\nu-2}2\right)^2 r^{-2} J_{(\nu-2)/2} = J_{(\nu-2)/2} \,,
$$
as well as
\begin{equation}
\label{eq:besselzero}
J_{(\nu-2)/2}(r) \sim \Gamma(\nu/2)^{-1} (r/2)^{(\nu-2)/2}
\qquad\text{as}\ r\to 0 \,.
\end{equation}
In the following we make use of the asymptotics
\begin{equation}
\label{eq:besselinfty}
J_{(\nu-2)/2}(r) = \sqrt{\frac{2}{\pi r}} \sin(r-\pi(\nu-3)/4)  + O(r^{-3/2})
\qquad\text{as}\ r\to\infty \,,
\end{equation}
which may also be differentiated with respect to $r$. (These asymptotics can be proved using Jost solutions, without referring to the theory of Bessel functions.) Using $-\Delta\phi=\phi$ we find
$$
-\Delta \psi = \psi - w' (2\phi'+ (\nu-1) r^{-1} \phi) - \phi w''
$$
with $(\cdot)'=\partial/\partial r$. Therefore, the eigenvalue equation for $\psi$ will be satisfied if we set
$$
V := \frac{w'}{w}\ \frac{2\phi'+ (\nu-1)r^{-1} \phi}{\phi} + \frac{w''}{w} \,.
$$
As usual, we want that $w'$ vanishes where $\phi$ vanishes and therefore we define
$$
g(r) := \int_0^r \phi(s)^2 s^{\nu-1} \,ds = \int_0^r J_{(\nu-2)/2}(s)^2 s\,ds
$$
The asymptotics \eqref{eq:besselinfty} show that
\begin{equation}
\label{eq:ginfty}
\lim_{r\to\infty} r^{-1} g(r) = \pi^{-1}
\end{equation}
We now define
$$
w_n(r) := (n^2 + g(r)^2)^{-\alpha}
$$
and we define $V_n$ with $w_n$ in place of $w$. As in the previous construction, the parameter $n$ need not be an integer, but we will use later that $n\geq 1$. Finally, we will choose $\alpha>\nu/4$, which by \eqref{eq:ginfty} will guarantee that $\psi\in L^2(\R^\nu)$. As before we do not keep track of how our estimates depend on $\alpha$.

A quick computation shows that
\begin{equation}
\label{eq:vn}
V_n = \frac{4 \alpha(\alpha+1)}{m_n^2} g^2 g'^2 - \frac{2\alpha}{m_n} \left( g'^2+gg''\right) - \frac{2\alpha}{m_n} g g' \frac{2\phi'+ (\nu-1)r^{-1}\phi}{\phi}
\end{equation}
with $m_n(r) := n^2 + g(r)^2$. We claim that we can bound
\begin{equation}
\label{eq:wvnbound}
|V_n| \leq C \left( m_n^{-1/2} + m_n^{-1} \right)
\end{equation}
with $C$ independent of $n$. Once this is shown we can use $n\geq 1$ to bound $m_n^{-1} \leq n^{-1} m_n^{-1/2}\leq m_n^{-1/2}$ and obtain $|V_n|\leq C m_n^{-1/2}$ which, in view of \eqref{eq:ginfty}, is equivalent to the bound stated in the theorem. Clearly this bound will imply $\|V_n\|_{L^p}\to 0$ if $p>\nu$.

Thus, it remains to prove \eqref{eq:wvnbound}. Using \eqref{eq:besselzero} and \eqref{eq:besselinfty} we obtain $g\leq m_n^{1/2}$ and $|g'|, |g''|\leq C$, which allows us to bound the first two terms on the right side of \eqref{eq:vn} by $C (m_n^{-1/2} + m_n^{-1})$. In order to bound the last term, we use $g' = \phi^2 r^{\nu-1}$, so
$$
g'\, \frac{2\phi'+ (\nu-1)r^{-1}\phi}{\phi} = r^{\nu-1} \phi (2\phi'+ (\nu-1)r^{-1}\phi) = ( r^{\nu-1}\phi^2)'
$$
Using again \eqref{eq:besselzero} and \eqref{eq:besselinfty} we obtain $|( r^{\nu-1}\phi^2)'|\leq C$, and therefore also the last term on the right side of \eqref{eq:vn} is bounded by $C m_n^{-1/2}$. This completes the proof of \eqref{eq:wvnbound} and of the theorem.
\end{proof}


\section{Bounds for $0\leq\gamma\leq 1/2$}\label{sec:fagain}

In this section we review the proofs in \cite{F} and show that these bounds are also valid for positive eigenvalues. Moreover, we shall prove bounds for potentials which belong to spaces of the form $L^{\gamma_1+\nu/2}+L^{\gamma_2+\nu/2}$.

Since we will use a similar argument later in Section \ref{sec:positive} we formulate the general principle in abstract terms.

\begin{proposition}\label{posevabstract}
Let $X$ be a separable complex Banach space of functions on $\R^\nu$ such that $L^2(\R^\nu)\cap X$ is dense in $X$ and such that the duality pairing $X^*\times X\to \C$ extends the inner product in $L^2(\R^\nu)$. Assume that
\begin{equation}
\label{eq:resboundabstract}
\|(-\Delta-z)^{-1}\|_{X\to X^*} \leq N(z) \,,
\end{equation}
where $N(z)$ is finite for $z\in\C\setminus[0,\infty)$ and continuous up to $[0,\infty)\setminus I$ for some set $I\subset [0,\infty)$. Assume that multiplication by $V:\R^\nu\to\C$ is a bounded operator from $X$ to $X^*$. Then, if $E\in\C\setminus I$ is an eigenvalue of $-\Delta+V$ in $L^2(\R^\nu)$ with an eigenfunction in $X^*$, then
$$
1\leq N(E)\, \|V\|_{X^*\to X} \,.
$$
\end{proposition}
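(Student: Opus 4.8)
The plan is to run the Birman--Schwinger principle in the Banach space setting. Suppose $E \in \C \setminus I$ is an eigenvalue of $-\Delta + V$ with eigenfunction $\psi \in X^*$, so $(-\Delta + V)\psi = E\psi$ in the appropriate weak sense. First I would rewrite this as $(-\Delta - E)\psi = -V\psi$. Since multiplication by $V$ maps $X \to X^*$ boundedly, I will want to know that $\psi \in X$ so that $V\psi \in X^*$; but $\psi$ is only given in $X^*$. The way around this is to apply the resolvent directly: from $(-\Delta - E)\psi = -V\psi$, one expects $\psi = -(-\Delta - E)^{-1}(V\psi)$, which requires $V\psi \in X$. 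So the natural substitution is $\phi := V\psi$, hoping $\phi \in X$, and then $\phi = V\psi = -V(-\Delta-E)^{-1}\phi$, i.e. $\phi = -V(-\Delta-E)^{-1}\phi$, the Birman--Schwinger eigenvalue equation at eigenvalue $-1$ for the operator $V(-\Delta-E)^{-1}$ acting on $X$.

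The key estimate is then immediate in spirit: if $\phi \in X$, $\phi \neq 0$, and $\phi = -V(-\Delta-E)^{-1}\phi$, then
\begin{equation*}
\|\phi\|_X = \|V(-\Delta-E)^{-1}\phi\|_X \leq \|V\|_{X^* \to X}\, \|(-\Delta-E)^{-1}\phi\|_{X^*} \leq \|V\|_{X^*\to X}\, N(E)\, \|\phi\|_X \,,
\end{equation*}
using \eqref{eq:resboundabstract} (valid at $z = E$ since $E \notin I$ and $N$ is continuous up to $[0,\infty)\setminus I$), and dividing by $\|\phi\|_X > 0$ gives $1 \leq N(E)\,\|V\|_{X^*\to X}$, as claimed. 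The remaining work is entirely in justifying that $\phi = V\psi$ is a nonzero element of $X$ and genuinely satisfies this equation.

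The main obstacle, therefore, is the regularity/density bookkeeping: making sense of the eigenvalue equation and of $V\psi$. Here the hypotheses are tailored to do exactly this. The eigenfunction lies in $L^2 \cap X^*$ (it is an $L^2$-eigenfunction, and we assume it lies in $X^*$). I would first establish $V\psi \in X$: this should follow because multiplication by $V$ is bounded $X \to X^*$, hence by a duality/transpose argument also $X^* \to X$ — indeed the pairing $X^* \times X \to \C$ extends the $L^2$ inner product, so for $f \in X^* \cap L^2$ and $g \in X \cap L^2$ one has $\langle Vf, g\rangle = \langle f, \overline{V} g\rangle$ in $L^2$, and the boundedness of multiplication by $\overline V$ on $X \to X^*$ (the conjugate enters because $V$ is complex, but $|\overline V| = |V|$ so the norm is the same) together with density of $L^2 \cap X$ in $X$ lets one extend $V: X^* \to X$ with $\|V\|_{X^*\to X} = \|V\|_{X\to X^*}$, which is why the statement phrases the final bound with $\|V\|_{X^*\to X}$. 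Then $\phi := V\psi \in X$. Next, $\phi \neq 0$: if $V\psi = 0$ then $(-\Delta - E)\psi = 0$ in the distributional sense, and since $\psi \in L^2(\R^\nu)$ this forces $\psi = 0$ (the free Laplacian has no $L^2$ eigenfunctions, $E \notin \spec(-\Delta)$ in the $L^2$ sense — more carefully, $\psi \in L^2$ with $(-\Delta - E)\psi = 0$ and $E \notin [0,\infty)$ is impossible, and for $E \in [0,\infty) \setminus I$ one argues via the limiting absorption / that $\widehat\psi$ is supported on a measure-zero sphere), contradicting $\psi$ being an eigenfunction. Finally I would verify $\phi = -V(-\Delta - E)^{-1}\phi$: apply $(-\Delta - E)^{-1}$, understood as the bounded map $X \to X^*$ from \eqref{eq:resboundabstract}, to both sides of $(-\Delta - E)\psi = -\phi$; one needs that $(-\Delta-E)^{-1}(-\Delta - E)\psi = \psi$ on the relevant domain, which holds by approximating $\psi$ and $\phi$ by $L^2$ functions and using that on $L^2$ (for $E$ off the spectrum, or in the limiting-absorption sense off $I$) the resolvent inverts $-\Delta - E$; then $\psi = (-\Delta-E)^{-1}\phi$ as elements of $X^*$, and multiplying by $V$ gives $\phi = V\psi = V(-\Delta - E)^{-1}\phi$ — up to the sign, which I have been sloppy about: with $(-\Delta-E)\psi = -V\psi$ one gets $\psi = -(-\Delta-E)^{-1}(V\psi)$, so $\phi = V\psi = -V(-\Delta-E)^{-1}\phi$, and the norm inequality is unaffected. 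This concludes the argument, the only delicate points being the density arguments needed to transfer identities from $L^2$ to the Banach-space setting and the non-vanishing of $V\psi$.
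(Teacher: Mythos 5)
Your argument is fine for $E\in\C\setminus[0,\infty)$, but it has a genuine gap precisely in the case the proposition is designed for, namely $E\in[0,\infty)\setminus I$. The hypothesis \eqref{eq:resboundabstract} provides the bound $N(z)$ only for $z\in\C\setminus[0,\infty)$; at a real point $E\geq 0$ what is assumed is merely that the \emph{numerical} function $N$ extends continuously, not that the resolvent itself has a boundary value as a bounded operator from $X$ to $X^*$. So the step ``apply $(-\Delta-E)^{-1}$, understood as the bounded map $X\to X^*$ from \eqref{eq:resboundabstract}'' is not licensed for positive $E$, and even if one grants a limiting absorption principle, the identity $\psi=-(-\Delta-E-i0)^{-1}(V\psi)$ is exactly what must be proved, not a fact one can quote. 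The paper circumvents this entirely: it defines $\psi_\epsilon:=(-\Delta-E-i\epsilon)^{-1}(-\Delta-E)\psi=-(-\Delta-E-i\epsilon)^{-1}(V\psi)$, obtains the uniform bound $\|\psi_\epsilon\|_{X^*}\leq N(E+i\epsilon)\,\|V\psi\|_X$, extracts a weak-* limit point in $X^*$, identifies that limit with $\psi$ by combining $\psi_\epsilon\to\psi$ in $L^2$ (dominated convergence in Fourier space) with the density of $L^2\cap X$ in $X$ and the compatibility of the pairing with the $L^2$ inner product, and concludes by weak-* lower semicontinuity of the norm. Your sentence ``which holds by approximating $\psi$ and $\phi$ by $L^2$ functions and using that \dots in the limiting-absorption sense off $I$ the resolvent inverts $-\Delta-E$'' is a placeholder for this regularization-and-identification argument, which is the actual substance of the proof. (Your Birman--Schwinger packaging via $\phi=V\psi$ versus the paper's division by $\|\psi\|_{X^*}$ is an inessential difference; your verification that $V\psi\neq0$ is correct.)

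A secondary error: the duality argument by which you pass from boundedness of $V:X\to X^*$ to boundedness of $V:X^*\to X$ with equal norm does not work. The Banach adjoint of a map $X\to X^*$ acts $X^{**}\to X^*$, not $X^*\to X$, and your manipulation $\langle Vf,g\rangle=\langle f,\overline V g\rangle$ attempts to pair two elements of $X^*$, which is undefined. In the concrete application $X=L^p$ with $p<2$ and $V\in L^{p/(2-p)}$, multiplication by $V$ is bounded from $L^{p'}$ to $L^p$ but \emph{not} from $L^p$ to $L^{p'}$, so the two properties are genuinely inequivalent and the claimed norm identity is false. The assumption actually used in the paper's proof (and the one your $\phi=V\psi\in X$ requires) is that multiplication by $V$ is bounded from $X^*$ to $X$ --- this is what the conclusion's $\|V\|_{X^*\to X}$ refers to --- so it should simply be taken as the hypothesis rather than derived.
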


\begin{proof}
We give the proof only for $E\in [0,\infty)\setminus I$, the case $E\in\C\setminus[0,\infty)$ being similar (and easier). We denote the eigenfunction by $\psi$ and observe that, since $\psi\in X^*$ and since multiplication by $V$ is bounded from $X^*$ to $X$,
\begin{equation}
\label{eq:posevabstractproof}
\|V\psi\|_X \leq \|V\|_{X^*\to X} \|\psi\|_{X*} \,,
\end{equation}
so $V\psi\in X$. Since $(-\Delta-E-i\epsilon)^{-1}$ is bounded from $X$ to $X^*$ and since, by the eigenvalue equation,
$$
\psi_\epsilon:=(-\Delta-E-i\epsilon)^{-1} (-\Delta-E)\psi = -(-\Delta-E-i\epsilon)^{-1} (V\psi) \,,
$$
we infer that $\psi_\epsilon\in X^*$ and
$$
\|\psi_\epsilon\|_{X^*} \leq N(E+i\epsilon) \, \|V\psi\|_X \,.
$$
Since $N(E+i\epsilon)\to N(E)$ as $\epsilon\to 0$, we see that the $\psi_\epsilon$ are uniformly bounded in $X^*$ and so they have a limit point in the weak-* topology of $X^*$. On the other hand, by dominated convergence in Fourier space, one easily verifies that $\psi_\epsilon\to\psi$ strongly (and hence also weakly) in $L^2(\R^\nu)$. Since $L^2(\R^\nu)\cap X$ is dense in $X$ and since the duality pairing $X^*\times X\to \C$ extends the inner product in $L^2(\R^\nu)$, we infer that the limit point in the weak-* topology of $X^*$ is unique and given by $\psi$. Moreover, by lower semi-continuity of the norm,
$$
\|\psi\|_{X^*} \leq \liminf_{\epsilon\to 0}\|\psi_\epsilon\|_{X^*} \leq \liminf_{\epsilon\to 0} N(E+i\epsilon)\, \|V\psi\|_X = N(E)\, \|V\psi\|_X 
$$
This, together with the bound \eqref{eq:posevabstractproof}, implies the bound in the proposition.
\end{proof}

Our first application of the abstract principle yields the following theorem, which extends the bound of \cite{F} to positive eigenvalues.

\begin{theorem}\label{posev}
Let $\nu\geq 2$, $0<\gamma\leq 1/2$ and $V\in L^{\gamma+\nu/2}(\R^\nu)$. Then any eigenvalue $E$ of $-\Delta+V$ in $L^2(\R^\nu)$ satisfies
$$
|E|^\gamma \leq C_{\gamma,\nu} \int_{\R^\nu} |V|^{\gamma+\nu/2}\,dx
$$
with $C_{\gamma,\nu}$ independent of $V$. Moreover, if $\nu\geq 3$ and
$$
\int_{\R^\nu} |V|^{\nu/2}\,dx < C_\nu \,,
$$
then $-\Delta+V$ in $L^2(\R^\nu)$ has no eigenvalue.
\end{theorem}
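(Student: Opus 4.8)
The plan is to derive both statements from the abstract Proposition~\ref{posevabstract}, applied with $X=L^p(\R^\nu)$ (so $X^*=L^{p'}(\R^\nu)$) and with the resolvent bound supplied by the Kenig--Ruiz--Sogge inequality~\eqref{eq:keruso}. For the first statement, given $0<\gamma\le 1/2$ I would choose $p$ by $1/p=(1+\gamma+\nu/2)/(2(\gamma+\nu/2))$; a short computation shows that this $p$ runs over $(2\nu/(\nu+2),2(\nu+1)/(\nu+3)]$, that the Kenig--Ruiz--Sogge exponent satisfies $-\nu/2+\nu/p-1=-\gamma/(\gamma+\nu/2)$, and that $1/p-1/p'=1/(\gamma+\nu/2)$. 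Hence \eqref{eq:keruso} supplies \eqref{eq:resboundabstract} with $N(z)=C|z|^{-\gamma/(\gamma+\nu/2)}$, which is finite off $\{0\}$ and continuous up to $(0,\infty)$ (so $I=\{0\}$), while Hölder's inequality shows that multiplication by $V$ is bounded from $X^*=L^{p'}$ to $X=L^p$ with norm at most $\|V\|_{L^{\gamma+\nu/2}}$. The remaining structural hypotheses of the proposition --- density of $L^2\cap L^p$ in $L^p$, and that the natural sesquilinear pairing $\int\bar g\,f$ extends the $L^2$ inner product --- are immediate.

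The only genuinely non-formal point is that the eigenfunction must lie in $X^*=L^{p'}$. Since $\gamma>0$, the exponent $\gamma+\nu/2$ lies above the critical value $\nu/2$, so $V$ is infinitesimally form-bounded relative to $-\Delta$ (Hölder plus a Gagliardo--Nirenberg interpolation inequality); hence $-\Delta+V$ is $m$-sectorial with form domain $H^1(\R^\nu)$, any eigenfunction $\psi$ lies in $H^1(\R^\nu)$, and Sobolev embedding together with interpolation against $L^2$ places $\psi$ in $L^{p'}$, since $p'\in[2,2\nu/(\nu-2)]$ (and $p'<\infty$ when $\nu=2$). Proposition~\ref{posevabstract} then yields $1\le N(E)\,\|V\|_{X^*\to X}\le C|E|^{-\gamma/(\gamma+\nu/2)}\|V\|_{L^{\gamma+\nu/2}}$ for every eigenvalue $E\neq 0$; rearranging and raising to the power $\gamma+\nu/2$ gives the asserted bound, and the case $E=0$ is trivial.

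For the second statement one runs the same argument at the endpoint $p=2\nu/(\nu+2)$, $p'=2\nu/(\nu-2)$, which is available precisely when $\nu\ge 3$: there the uniform Sobolev inequality gives \eqref{eq:resboundabstract} with a \emph{constant} $N(z)\equiv C_\nu'$ (so $I=\emptyset$), multiplication by $V\in L^{\nu/2}$ maps $L^{2\nu/(\nu-2)}$ to $L^{2\nu/(\nu+2)}$ with norm $\|V\|_{L^{\nu/2}}$, and one picks $C_\nu$ small enough that $\int|V|^{\nu/2}<C_\nu$ forces both $m$-sectoriality of $-\Delta+V$ (hence $\psi\in H^1\subset L^{2\nu/(\nu-2)}$) and $C_\nu'\|V\|_{L^{\nu/2}}<1$. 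The proposition would then give $1\le C_\nu'\|V\|_{L^{\nu/2}}<1$, a contradiction, so $-\Delta+V$ has no eigenvalue.

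Apart from the routine exponent bookkeeping, the points needing care are the two ``soft'' ingredients just used: checking that the eigenfunction actually lies in $X^*$ --- which is why one must first know $\psi\in H^1$, i.e.\ that $V$ is relatively form-bounded with bound $<1$ --- and, for the second statement, the endpoint uniform Sobolev estimate at $p=2\nu/(\nu+2)$, which is exactly why that statement is restricted to $\nu\ge 3$. Neither is deep, but both are what makes the clean application of Proposition~\ref{posevabstract} possible and what distinguishes the treatment of eigenvalues in $[0,\infty)$ from the case $E\notin[0,\infty)$ already handled in~\cite{F}.
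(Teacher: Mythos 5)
Your proof is correct and follows essentially the same route as the paper's: you apply Proposition~\ref{posevabstract} with $X=L^p$ and the Kenig--Ruiz--Sogge bound, after noting $\psi\in H^1\hookrightarrow L^{p'}$, and run the endpoint $p=2\nu/(\nu+2)$ version for the second statement. The extra bookkeeping you give (the exponent computation, the remark on form-boundedness ensuring $\psi\in H^1$) simply fills in details the paper leaves implicit.
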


\begin{proof}
We apply Proposition \ref{posevabstract} with $X=L^p(\R^\nu)$, where $p$ is defined by $p/(2-p)=\gamma+\nu/2$, so that the assumptions on $\gamma$ become $2\nu/(\nu+2)<p\leq 2(\nu+1)/(\nu+3)$. Since $-\Delta+V$ is defined via $m$-sectorial forms, we know a-priori that an eigenfunction satisfies $\psi\in H^1(\R^\nu)$ and so, by Sobolev embedding theorems, $\psi\in L^{p'}(\R^\nu)=X^*$. Note also that, by H\"older's inequality,
$$
\|V\|_{X^*\to X} = \|V\|_{p/(2-p)}
$$
According to the Kenig--Ruiz--Sogge bound \eqref{eq:keruso} assumption \eqref{eq:resboundabstract} is satisfied with $N(z)= C|z|^{-\nu/2+\nu/p-1}$ and $I=\{0\}$. Therefore the claimed bound follows from Proposition \ref{posevabstract}. The second part of the theorem is proved similarly, taking $\gamma=0$, $I=\emptyset$ and noting that for $\nu\geq 3$ the bound \eqref{eq:keruso} holds also for $p=2\nu/(\nu+2)$. This completes the proof.
\end{proof}

\begin{remark}\label{noposev}
In a similar spirit we note that if $\nu=1$ and $V\in L^1(\R)$ (possibly complex-valued), then $-d^2/dx^2 + V(x)$ in $L^2(\R)$ has no positive eigenvalue. Thus the restriction that the bound $|E|^{1/2} \leq (1/2) \|V\|_1$ holds only for eigenvalues $E\in\C\setminus(0,\infty)$, which appears frequently in the literature, is unnecessary. (The absence of positive eigenvalues follows from standard Jost function techniques which show that for $k>0$ the equation $-\psi'' + V\psi = k^2 \psi$ has two solutions $\psi_+$ and $\psi_-$ with $\psi_\pm(x) \sim e^{\pm ikx}$ as $x\to\infty$, so no solution of this equation is square integrable. These arguments go back at least to Titchmarsh \cite{T}.)
\end{remark}

\begin{proposition}\label{lpsum}
Let $V_1\in L^{\gamma_1+\nu/2}(\R^\nu)$, $V_2\in L^{\gamma_2+\nu/2}(\R^\nu)$, where $0<\gamma_1<\gamma_2\leq 1/2$ if $\nu=2$ and $0\leq\gamma_1<\gamma_2\leq 1/2$ if $\nu\geq 3$. Then any eigenvalue $E\in\C\setminus\{0\}$ of $-\Delta+V_1+V_2$ in $L^2(\R^\nu)$ satisfies
$$
|E|^{-\gamma_1} \int_{\R^\nu} |V_1|^{\gamma_1+\nu/2} \,dx + |E|^{-\gamma_2} \int_{\R^\nu} |V|^{\gamma_2+\nu/2}\,dx \geq c_{\gamma_1,\gamma_2,\nu}>0 \,.
$$
\end{proposition}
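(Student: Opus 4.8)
The plan is to apply Proposition \ref{posevabstract} in the same way as in the proof of Theorem \ref{posev}, but now with a Banach space $X$ tailored to the sum structure $V=V_1+V_2$. Specifically, let $p_1,p_2$ be the exponents defined by $p_j/(2-p_j)=\gamma_j+\nu/2$, so that the constraints on $\gamma_j$ translate to $2\nu/(\nu+2)< p_1< p_2\leq 2(\nu+1)/(\nu+3)$ (with the left endpoint included when $\nu\geq 3$ and $\gamma_1=0$). I would take $X=L^{p_1}(\R^\nu)+L^{p_2}(\R^\nu)$, so that $X^*=L^{p_1'}(\R^\nu)\cap L^{p_2'}(\R^\nu)$. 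One first checks that this $X$ satisfies the structural hypotheses of Proposition \ref{posevabstract}: it is a separable Banach space of functions, $L^2\cap X$ is dense (since $L^2\cap L^{p_1}$ is already dense in $L^{p_1}$ and hence in the sum), and the duality pairing extends the $L^2$ inner product. The eigenfunction $\psi\in H^1(\R^\nu)$ lies, by Sobolev embedding, in both $L^{p_1'}$ and $L^{p_2'}$, hence in $X^*$.

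Next I would verify the resolvent bound \eqref{eq:resboundabstract} for this $X$. By \eqref{eq:keruso}, $(-\Delta-z)^{-1}$ maps $L^{p_j}\to L^{p_j'}$ with norm $\leq C|z|^{-\nu/2+\nu/p_j-1}$, and a short computation gives $-\nu/2+\nu/p_j-1=-\gamma_j/(\gamma_j+\nu/2)$; write $\theta_j:=\gamma_j/(\gamma_j+\nu/2)\in(0,1)$, so the exponents are $|z|^{-\theta_j}$. A standard interpolation/duality argument shows that an operator bounded from $L^{p_1}\to L^{p_1'}$ and from $L^{p_2}\to L^{p_2'}$ is bounded from $X=L^{p_1}+L^{p_2}$ into $X^*=L^{p_1'}\cap L^{p_2'}$, with norm controlled by the maximum of the two individual norms. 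Hence \eqref{eq:resboundabstract} holds with $N(z)=C\max\{|z|^{-\theta_1},|z|^{-\theta_2}\}$, which is finite off $[0,\infty)$ and continuous up to $(0,\infty)$, so we may take $I=\{0\}$. For the multiplication bound: by Hölder, multiplication by $V_j$ maps $L^{p_j'}\to L^{p_j}$ with norm $\|V_j\|_{p_j/(2-p_j)}=\|V_j\|_{\gamma_j+\nu/2}$; combining, multiplication by $V=V_1+V_2$ maps $X^*=L^{p_1'}\cap L^{p_2'}\to L^{p_1}+L^{p_2}=X$ with norm $\leq \|V_1\|_{\gamma_1+\nu/2}+\|V_2\|_{\gamma_2+\nu/2}$.

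Proposition \ref{posevabstract} then yields, for any eigenvalue $E\in\C\setminus\{0\}$,
$$
1\leq N(E)\,\|V\|_{X^*\to X}\leq C\max\{|E|^{-\theta_1},|E|^{-\theta_2}\}\,\bigl(\|V_1\|_{\gamma_1+\nu/2}+\|V_2\|_{\gamma_2+\nu/2}\bigr) \,.
$$
Since $\max\{a,b\}\leq a+b$ for $a,b\geq0$, this gives
$$
1\leq C\left( |E|^{-\theta_1}\|V_1\|_{\gamma_1+\nu/2} + |E|^{-\theta_2}\|V_2\|_{\gamma_2+\nu/2} \right) ,
$$
and raising $\|V_j\|_{\gamma_j+\nu/2}$ to the appropriate power — note $|E|^{-\theta_j}\|V_j\|_{\gamma_j+\nu/2} = \bigl(|E|^{-\gamma_j}\int|V_j|^{\gamma_j+\nu/2}\bigr)^{1/(\gamma_j+\nu/2)}$ up to constants, so one should instead work directly with $\gamma_j$-powers — rearranges into the stated inequality after absorbing constants. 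I expect the main technical point to be the clean statement and proof of the interpolation fact that $\|T\|_{(L^{p_1}+L^{p_2})\to(L^{p_1'}\cap L^{p_2'})}\lesssim\max_j\|T\|_{L^{p_j}\to L^{p_j'}}$; this is elementary (decompose $f=f_1+f_2$ optimally, estimate each $Tf_j$ in each target norm), but it is the one place where the sum-space structure really has to be handled with care. A secondary bookkeeping point is matching the homogeneity of the final inequality: one must track that the correct normalization is $|E|^{-\gamma_j}\|V_j\|_{\gamma_j+\nu/2}^{(\gamma_j+\nu/2)/\gamma_j}$, which follows by raising the displayed bound to suitable powers and using that if $A+B\geq 1$ then $\max\{A,B\}\geq 1/2$, hence one of the two normalized quantities is bounded below.
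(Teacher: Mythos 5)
Your proposal has a genuine gap at the interpolation step. You claim that if an operator $T$ is bounded from $L^{p_1}\to L^{p_1'}$ and from $L^{p_2}\to L^{p_2'}$, then it is bounded from $X=L^{p_1}+L^{p_2}$ into $X^*=L^{p_1'}\cap L^{p_2'}$. This is false. The ``decompose $f=f_1+f_2$ optimally'' argument you sketch gives $Tf = Tf_1 + Tf_2$ with $Tf_1\in L^{p_1'}$ and $Tf_2\in L^{p_2'}$; that places $Tf$ in the \emph{sum} $L^{p_1'}+L^{p_2'}$, not in the \emph{intersection} $L^{p_1'}\cap L^{p_2'}$, and it is the intersection that is the dual of $L^{p_1}+L^{p_2}$. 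To land in the intersection you would need the genuinely off-diagonal bounds $T:L^{p_1}\to L^{p_2'}$ and $T:L^{p_2}\to L^{p_1'}$, which the Kenig--Ruiz--Sogge estimate \eqref{eq:keruso} does not provide (it only gives the diagonal $L^p\to L^{p'}$ bounds), and which do not follow from the two diagonal bounds by any standard interpolation. So the hypothesis $\|(-\Delta-z)^{-1}\|_{X\to X^*}\le N(z)$ of Proposition \ref{posevabstract} is not verified by your argument, and the abstract framework cannot be applied with this choice of $X$.

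The paper's proof avoids this entirely by never leaving $L^2$: instead of trying to make the resolvent map a sum-space into its dual, it rewrites the eigenvalue equation in symmetrized Birman--Schwinger form, with $\phi_\epsilon=|-\Delta-E-i\epsilon|^{1/2}\psi$, and exploits that the \emph{operator norm on $L^2$} of $|-\Delta-E-i\epsilon|^{-1/2}\,V\,|-\Delta-E-i\epsilon|^{-1/2}$ is sub-additive under $V=V_1+V_2$. Each piece $\|(\sgn V_j)|V_j|^{1/2}(-\Delta-E-i\epsilon)^{-1}|V_j|^{1/2}\|$ is then controlled by \eqref{eq:keruso} with the exponent $p_j$ appropriate to $V_j$, and one sends $\epsilon\to 0$. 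The key structural point --- and the thing your proposal misses --- is that the place where the splitting $V=V_1+V_2$ must happen is an operator-norm estimate on $L^2$ (which is trivially sub-additive), not a functional-analytic statement about the resolvent acting between Lebesgue sum/intersection spaces. Your final bookkeeping step (passing from $1\le C(A^{1/q_1}+B^{1/q_2})$ to $A+B\ge c$ via $\max\{A^{1/q_1},B^{1/q_2}\}\ge 1/(2C)$) is fine and is the same elementary step the paper leaves implicit; the problem is upstream, in establishing the inequality $1\le C(|E|^{-\theta_1}\|V_1\|+|E|^{-\theta_2}\|V_2\|)$ in the first place.
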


\begin{proof}
Again we prove this only for positive eigenvalues, the other case being simpler. Let $\psi$ be the eigenfunction and let $\epsilon>0$ be a small parameter. We denote $S_\epsilon:=|-\Delta-E-i\epsilon| (-\Delta-E-i\epsilon)^{-1}$ and  $\phi_\epsilon:=|-\Delta-E-i\epsilon|^{1/2}\psi$, where $\psi$ is the eigenfunction. Since $\psi\in H^1(\R^\nu)$, $\phi_\epsilon\in L^2(\R^\nu)$. We can write the eigenvalue equation in the form
$$
S_\epsilon |-\Delta-E-i\epsilon|^{-1/2} V |-\Delta-E-i\epsilon|^{-1/2} \phi_\epsilon = -\frac{-\Delta-E}{-\Delta-E-i\epsilon} \phi_\epsilon \,.
$$
Therefore,
\begin{align}
\label{eq:lpsumproof}
\left\| \frac{-\Delta-E}{-\Delta-E-i\epsilon} \phi_\epsilon \right\| & = \| S_\epsilon |-\Delta-E-i\epsilon|^{-1/2} V |-\Delta-E-i\epsilon|^{-1/2} \phi_\epsilon \| \notag \\
& \leq \left( \left\| S_\epsilon |-\Delta-E-i\epsilon|^{-1/2} V_1 |-\Delta-E-i\epsilon|^{-1/2} \right\| \right. \notag \\
& \qquad \left. + \left\| S_\epsilon |-\Delta-E-i\epsilon|^{-1/2} V_2 |-\Delta-E-i\epsilon|^{-1/2}\right\| \right) \|\phi_\epsilon \| \,.
\end{align}
Since the operator norm of $AB$ equals that of $BA$, we have
$$
\left\| S_\epsilon |-\Delta-E-i\epsilon|^{-1/2} V_j |-\Delta-E-i\epsilon|^{-1/2} \right\| = \left\| (\sgn V_j) |V_j|^{1/2} (-\Delta-E-i\epsilon)^{-1} |V_j|^{1/2} \right\|
$$
and, as in \cite{F}, the Kenig--Ruiz--Sogge bound \eqref{eq:keruso} implies that
$$
\left\| (\sgn V_j) |V_j|^{1/2} (-\Delta-E-i\epsilon)^{-1} |V_j|^{1/2} \right\|
\leq C (|E|^2+\epsilon^2)^{-\gamma_j/(2\gamma_j+\nu)} \|V_j\|_{\gamma_j+\nu/2} \,.
$$
Inserting this into \eqref{eq:lpsumproof} we obtain
\begin{align}
\left\| \frac{-\Delta-E}{-\Delta-E-i\epsilon} \phi_\epsilon \right\|
& \leq C \left( (|E|^2+\epsilon^2)^{-\gamma_1/(2\gamma_1+\nu)} \|V_1\|_{\gamma_1+\nu/2} \right. \notag \\ 
&\qquad \left. + (|E|^2+\epsilon^2)^{-\gamma_2/(2\gamma_2+\nu)} \|V_2\|_{\gamma_2+\nu/2} \right) \|\phi_\epsilon\| \,.
\end{align}
Finally, we observe that $\|\phi_\epsilon\|\leq \|\phi\|<\infty$ and that $\frac{-\Delta-E}{-\Delta-E-i\epsilon} \phi_\epsilon \to \phi$ in $L^2(\R^\nu)$ (by dominated convergence in Fourier space. Thus, as $\epsilon\to 0$, we obtain the claimed bound in the theorem.
\end{proof}


\section{Bounds for $1/2<\gamma<\nu/2$}\label{sec:positive}

\subsection{Eigenvalue bounds}

In this section we show that \eqref{eq:keller} holds for $1/2<\gamma<\nu/2$ if $V$ is radial and, more generally, if for every $r>0$, $V(r\omega)$ is replaced by $\text{ess-sup}_{\omega\in\Sph^{\nu-1}} |V(r\omega)|$. The precise statement is

\begin{theorem}\label{evbound}
Let $\nu\geq 2$ and $1/2<\gamma<\nu/2$. Then
$$
|E|^\gamma \leq C_{\gamma,\nu} \int_0^\infty \|V(r\, \cdot)\|_{L^\infty(\Sph^{\nu-1})}^{\gamma+\nu/2} r^{\nu-1} \,dr \,.
$$
\end{theorem}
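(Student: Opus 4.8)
The plan is to deduce Theorem~\ref{evbound} from the abstract principle in Proposition~\ref{posevabstract}, applied not on $L^p(\R^\nu)$ but on a mixed-norm space adapted to the radial structure. Given $1/2<\gamma<\nu/2$, define $p$ by $p/(2-p)=\gamma+\nu/2$; then $p$ ranges over $\left(2\nu/(\nu+1),\,2(\nu+1)/(\nu+3)\right)$, and in particular $p<2<p'$. I would take
$$
X = L^p(\R_+,r^{\nu-1}\,dr;L^2(\Sph^{\nu-1})) \,, \qquad X^* = L^{p'}(\R_+,r^{\nu-1}\,dr;L^2(\Sph^{\nu-1})) \,,
$$
the identification of the dual being valid because $L^2(\Sph^{\nu-1})$ is a Hilbert space and $1<p<\infty$. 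The duality pairing is $\int_0^\infty\int_{\Sph^{\nu-1}}\bar f\,g\,d\omega\,r^{\nu-1}\,dr$, which coincides with the $L^2(\R^\nu)$ inner product on $L^2\cap X$, and $L^2\cap X$ is dense in $X$, so the structural hypotheses of Proposition~\ref{posevabstract} are in place.

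The key analytic input is the resolvent estimate \eqref{eq:resboundabstract} for this $X$, which is exactly the mixed-norm uniform Sobolev bound (Theorem~\ref{resmixed}): $\|(-\Delta-z)^{-1}\|_{X\to X^*}\leq C|z|^{-\nu/2+\nu/p-1}$ for $z\in\C\setminus[0,\infty)$, with continuous extension up to $[0,\infty)\setminus\{0\}$, so that $N(z)=C|z|^{-\nu/2+\nu/p-1}$ and $I=\{0\}$. I expect the proof of Theorem~\ref{resmixed} to be the real obstacle: one decomposes $f$ into spherical harmonics, uses that $(-\Delta-z)^{-1}$ acts diagonally in the angular momentum $\ell$ with a kernel built from $J_{\ell+(\nu-2)/2}$, and then needs bounds on the resulting one-dimensional operators uniform in $\ell$ and in $z$. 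This is the Barcelo--Ruiz--Vega circle of ideas \cite{BRV} and requires the precise Bessel estimates collected in the appendix; using $L^2(\Sph^{\nu-1})$ on both sides makes the angular part transparent via Plancherel in the harmonic index, and the restriction $\gamma<\nu/2$, i.e.\ $p>2\nu/(\nu+1)$, is precisely the range where these one-dimensional bounds survive.

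It then remains to verify the two hypotheses of Proposition~\ref{posevabstract} involving $V$. First, since $-\Delta+V$ is defined through $m$-sectorial forms, any eigenfunction $\psi$ lies in $H^1(\R^\nu)$; by Sobolev embedding $\psi\in L^{p'}(\R^\nu)$ (one checks $2<p'<2\nu/(\nu-2)$ for $\nu\geq 3$, and any $p'<\infty$ is admissible for $\nu=2$), and since $\Sph^{\nu-1}$ has finite measure and $p'>2$ one has $L^{p'}(\R^\nu)\hookrightarrow X^*$, so $\psi\in X^*$. Second, for each $r>0$ the Cauchy--Schwarz inequality on $\Sph^{\nu-1}$ gives $\|V(r\,\cdot)u(r\,\cdot)\|_{L^2(\Sph^{\nu-1})}\leq\|V(r\,\cdot)\|_{L^\infty(\Sph^{\nu-1})}\|u(r\,\cdot)\|_{L^2(\Sph^{\nu-1})}$, and then H\"older in $r$ with respect to $r^{\nu-1}\,dr$ with exponents $\gamma+\nu/2$ and $p'$ (note $1/(\gamma+\nu/2)+1/p'=(2-p)/p+1/p'=1/p$) yields
$$
\|Vu\|_X \leq \left(\int_0^\infty \|V(r\,\cdot)\|_{L^\infty(\Sph^{\nu-1})}^{\gamma+\nu/2}\, r^{\nu-1}\,dr\right)^{1/(\gamma+\nu/2)}\|u\|_{X^*} \,,
$$
so $\|V\|_{X^*\to X}$ equals the $(\gamma+\nu/2)$-th root of the right-hand side of the theorem.

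Finally, Proposition~\ref{posevabstract} gives $1\leq N(E)\,\|V\|_{X^*\to X}=C\,|E|^{-\nu/2+\nu/p-1}\,\|V\|_{X^*\to X}$ for any eigenvalue $E$, including $E\in(0,\infty)$, which is why the continuity of $N$ up to $[0,\infty)\setminus\{0\}$ is needed (the case $E=0$ being trivial). A direct computation with $p=2(\gamma+\nu/2)/(\gamma+\nu/2+1)$ shows $-\nu/2+\nu/p-1=-\gamma/(\gamma+\nu/2)$, hence $|E|^{\gamma/(\gamma+\nu/2)}\leq C\,\|V\|_{X^*\to X}$; raising to the power $\gamma+\nu/2$ gives
$$
|E|^\gamma \leq C_{\gamma,\nu}\int_0^\infty \|V(r\,\cdot)\|_{L^\infty(\Sph^{\nu-1})}^{\gamma+\nu/2}\, r^{\nu-1}\,dr \,,
$$
as claimed. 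Apart from invoking Theorem~\ref{resmixed}, every step is elementary, so the entire difficulty is concentrated in the mixed-norm resolvent estimate.
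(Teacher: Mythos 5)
Your proposal is correct and takes essentially the same route as the paper, which likewise deduces Theorem \ref{evbound} from Theorem \ref{resmixed} via Proposition \ref{posevabstract} with $X=L^p(\R_+,r^{\nu-1}\,dr;L^2(\Sph^{\nu-1}))$, exactly as Theorem \ref{posev} was obtained from \eqref{eq:keruso}, with the same H\"older and Sobolev verifications. The only blemish is a harmless slip in stating the range of $p$: for $1/2<\gamma<\nu/2$ one has $2(\nu+1)/(\nu+3)<p<2\nu/(\nu+1)$, so $\gamma<\nu/2$ corresponds to $p<2\nu/(\nu+1)$ (the upper restriction required by Theorem \ref{resmixed}), not $p>2\nu/(\nu+1)$; your actual computation respects the correct range, so nothing breaks.
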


At the endpoint $\gamma=\nu/2$ we have the following bound

\begin{theorem}\label{endpoint}
Let $\nu\geq 2$. Then
$$
|E|^{\nu/2} \leq C_\nu \left( \int_0^\infty | \{ r>0: \emph{\text{ess-sup}}_{\omega\in\Sph^{\nu-1}} |V(r\omega)| > \tau \} |_\nu^{1/\nu} \,d\tau \right)^\nu \,,
$$
where $|\cdot|_\nu$ denotes the measure $|\Sph^{\nu-1}|\, r^{\nu-1}\,dr$ on $(0,\infty)$
\end{theorem}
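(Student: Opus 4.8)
The plan is to obtain the endpoint result as a consequence of the same machinery that yields Theorem \ref{evbound}, but with the Lebesgue condition on the radial profile replaced by a Lorentz-type condition that is natural at the endpoint $\gamma = \nu/2$. Concretely, by the Birman--Schwinger principle (or the abstract Proposition \ref{posevabstract}), an eigenvalue $E$ of $-\Delta + V$ forces $1 \le \| |V|^{1/2}(-\Delta - E - i0)^{-1}|V|^{1/2}\|$, and by the reduction already used in the proof of Proposition \ref{lpsum} this operator norm is controlled once we have a weighted resolvent estimate. The key input is Theorem \ref{resmixed}: the uniform Sobolev bound \eqref{eq:keruso} remains valid when $L^p(\R^\nu)$, $L^{p'}(\R^\nu)$ are replaced by $L^p(\R_+, r^{\nu-1}dr; L^2(\Sph^{\nu-1}))$ and its dual, for $p$ in the full range $2\nu/(\nu+2) < p < 2\nu/(\nu+1)$ corresponding to $0 < \gamma < \nu/2$. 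At the endpoint $\gamma = \nu/2$, which corresponds to $p = 2\nu/(\nu+1)$, the bound \eqref{eq:keruso} in its Lebesgue form must fail (this is a Stein--Tomas / Stein--Weiss type endpoint), but one expects it to survive if one replaces the radial $L^p$-space by the Lorentz space $L^{\nu,1}(\R_+, r^{\nu-1}dr; L^2(\Sph^{\nu-1}))$ on the input side (and correspondingly $L^{\nu',\infty}$ on the output side). The first step is therefore to prove, or extract from the appendix's Bessel-function estimates, the endpoint resolvent bound $\|(-\Delta - z)^{-1}\|_{L^{\nu,1}_r L^2_\omega \to L^{\nu',\infty}_r L^2_\omega} \le C_\nu |z|^{-1/2}$ uniformly in $z$.

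Granting this, the second step is the real-interpolation / duality argument that converts the resolvent bound into the eigenvalue bound in the stated form. Writing $U(r) := \mathrm{ess\text{-}sup}_{\omega} |V(r\omega)|$, one has pointwise $|V\psi| \le U |\psi|$, so multiplication by $|V|^{1/2}$ maps $L^{\nu',\infty}_r L^2_\omega$ into $L^{\nu,1}_r L^2_\omega$ with norm bounded by $\| U^{1/2}\|_{L^{2\nu,2}(\R_+, r^{\nu-1}dr)}$; indeed Hölder in Lorentz spaces on the measure $r^{\nu-1}dr$ gives $\|U^{1/2} f\|_{L^{\nu,1}} \le \|U^{1/2}\|_{L^{2\nu,2}} \|f\|_{L^{2\nu,2}}$ and $L^{\nu',\infty} \subset L^{2\nu,2}$ is false in general, so one instead threads the estimate through with the correct pair $L^{2\nu,2} \times L^{2\nu,2} \to L^{\nu,1}$ on the radial variable after using that the relevant exponents satisfy $1/\nu = 1/(2\nu)+1/(2\nu)$ and $1 = 1/2+1/2$ on the second Lorentz index. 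Combining this with the endpoint resolvent bound and the Birman--Schwinger inequality yields
$$
1 \le C_\nu\, |E|^{-1/2}\, \|U^{1/2}\|_{L^{2\nu,2}(\R_+,\,r^{\nu-1}dr)}^2 \,,
$$
i.e. $|E|^{1/2} \le C_\nu \|U\|_{L^{\nu,1}(\R_+,\,r^{\nu-1}dr)}$. The final step is purely bookkeeping: by the standard formula for the Lorentz $L^{\nu,1}$ (quasi-)norm in terms of the distribution function, $\|U\|_{L^{\nu,1}(d|\cdot|_\nu)} \asymp \int_0^\infty |\{U > \tau\}|_\nu^{1/\nu}\, d\tau$, which is exactly the right-hand side displayed in the theorem; raising both sides to the $\nu$-th power gives the stated inequality. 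One must be mildly careful that the Birman--Schwinger reduction is justified for eigenfunctions only assumed to lie in $H^1$, but this is handled exactly as in the proof of Proposition \ref{posevabstract} via the approximation $E + i\epsilon$, $\epsilon \to 0$, using that $N(z)$ is continuous up to $(0,\infty)$.

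The main obstacle will be establishing the endpoint resolvent estimate in the Lorentz-refined mixed-norm space, i.e. the $p = 2\nu/(\nu+1)$ case of Theorem \ref{resmixed} with $L^{\nu,1}_r$ in place of $L^p_r$. This is where the sharp asymptotics and uniform bounds on Bessel functions $J_{(\nu-2)/2 + k}$ from the appendix enter: after decomposing into spherical harmonics, $(-\Delta - z)^{-1}$ acts blockwise, and on the $k$-th block it is an integral operator whose kernel is built from products of Bessel functions of order $(\nu-2)/2 + k$; one needs bounds uniform in $k$ and in $z$, and at the endpoint these must be summed against the Lorentz norm rather than against $L^p$. Following Barcelo--Ruiz--Vega, one controls each block via precise pointwise Bessel bounds (distinguishing the regimes $r|z|^{1/2} \lesssim k$ and $r|z|^{1/2} \gtrsim k$), and the gain at the endpoint comes from the fact that the radial Lorentz space $L^{\nu,1}_r$ is precisely the space for which the borderline Stein--Weiss estimate for the half-line integral operator $\int_0^\infty \min(r,s)^{?}\max(r,s)^{?}\cdots$ holds with a uniform constant. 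Once this endpoint mixed-norm resolvent bound is in hand, the passage to the eigenvalue inequality is soft.
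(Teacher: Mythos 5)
Your plan hinges entirely on an endpoint resolvent estimate that you do not prove and that the paper never establishes: a uniform bound for $(-\Delta-z)^{-1}$ from a Lorentz-refined mixed space into its dual at the exponent corresponding to $\gamma=\nu/2$. (As an aside, the exponents you write are off: the Birman--Schwinger factorization with $|V|^{1/2}\in L^{2\nu,2}_r L^\infty_\omega$ calls for $L^{2\nu/(\nu+1),1}_r L^2_\omega \to L^{2\nu/(\nu-1),\infty}_r L^2_\omega$, not $L^{\nu,1}_r\to L^{\nu',\infty}_r$; your own H\"older bookkeeping in the second paragraph is left unresolved.) More seriously, the route you suggest for proving it --- spherical harmonic decomposition plus the appendix's Bessel estimates --- breaks down exactly at this endpoint. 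The paper's kernel criterion (Lemma \ref{intop}) and the uniform Bessel bounds (Proposition \ref{bessel}, Lemma \ref{besselint}) require the strict inequality $q/2>\rho+1$ with $\rho=-q(\nu-2)/2+\nu-1$, i.e.\ $q>2\nu/(\nu-1)$, equivalently $p<2\nu/(\nu+1)$; at $p=2\nu/(\nu+1)$ one has $\rho-q/2=-1$ and the tail integrals $\int^\infty |J_\mu(r)|^q|H^{(1)}_\mu(r')|^q(rr')^\rho$ diverge logarithmically, and a Schur-type $L^{q}$ kernel test can in any case never produce a weak-type $(L^{p,1}\to L^{p',\infty})$ bound. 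So the ``main obstacle'' you flag is the whole theorem, and nothing in the paper's toolbox closes it as stated; a genuinely new argument (a weak-type operator bound uniform in the spherical-harmonic index) would be needed, and its validity is not obvious given that the corresponding mixed-norm restriction estimate fails at $p=2\nu/(\nu+1)$.

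The paper takes a different and much softer route: it quotes the Barcelo--Ruiz--Vega weighted resolvent estimate (Theorem \ref{brv}), $\int |(-\Delta-z)^{-1}f|^2 V\,dx \le C|z|^{-1}\|V\|_{MT}^2\int |f|^2V^{-1}dx$, and reduces Theorem \ref{endpoint} to the elementary embedding $\|V\|_{MT}\le C_\nu\|V\|_{L^{\nu,1}(\R_+,r^{\nu-1}dr;L^\infty(\Sph^{\nu-1}))}$, proved by Lorentz-space H\"older against the explicit kernel $\rho_R(r)=r^{-\nu+2}(r^2-R^2)^{-1/2}\chi_{\{r>R\}}\in L^{\nu/(\nu-1),\infty}$; the eigenvalue bound then follows from Proposition \ref{posevabstract} applied with the weighted space $X=L^2(w^{-1})$, $w=\max\{|V|,\delta G\}$, letting $\delta\to0$. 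If you want to salvage your approach, you would either have to prove the Lorentz endpoint of Theorem \ref{resmixed} from scratch (uniformly in $z$ and in the angular momentum), or switch to the weighted $L^2$/Mizohata--Takeuchi framework as the paper does.
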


Note that the integral on the right side in the theorem is the norm in the Lorentz space $L^{\nu,1}(\R_+,r^{\nu-1}\,dr; L^\infty(\Sph^{\nu-1}))$.

We will deduce Theorems \ref{evbound} and \ref{endpoint} from the following two resolvent bounds. The first one will imply Theorem \ref{evbound}.

\begin{theorem}\label{resmixed}
Let $\nu\geq 2$ and $2(\nu+1)/(\nu+3)< p<2\nu/(\nu+1)$. Then for all $f\in L^p(\R_+,r^{\nu-1}\,dr;L^2(\Sph^{\nu-1}))$ and $z\in\C\setminus[0,\infty)$,
\begin{align*}
& \left( \int_0^\infty \left( \int_{\Sph^{\nu-1}} |((-\Delta-z)^{-1} f)(r\omega)|^2 \,d\omega \right)^{p'/2} r^{\nu-1} \,dr \right)^{1/p'} \\
& \qquad \leq C_{p,\nu} |z|^{-\nu/2+\nu/p-1} \left( \int_0^\infty \left( \int_{\Sph^{\nu-1}} |f(r\omega)|^2 \,d\omega \right)^{p/2} r^{\nu-1} \,dr \right)^{1/p} \,.
\end{align*}
\end{theorem}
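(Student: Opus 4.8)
The plan is to separate variables on the sphere, reduce the mixed-norm estimate to a family of one-dimensional resolvent bounds for the radial pieces that are \emph{uniform} in the angular momentum, and then prove those one-dimensional bounds from the explicit Green's function in terms of Bessel functions together with uniform-in-order pointwise estimates for Bessel functions. First I would reduce to $|z|=1$: under the rescaling $x\mapsto|z|^{1/2}x$ both sides of the asserted inequality transform identically, so it suffices to treat $z=e^{i\theta}$, $\theta\in(0,2\pi)$, provided all constants below are kept independent of $\theta$ --- which will be the crux, since the estimate must also survive $\theta\to 0^+$ and $\theta\to 2\pi^-$.

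Next, expand into spherical harmonics. Writing $f=\sum_{\ell,m}f_{\ell,m}(r)\,Y_{\ell,m}$ in an orthonormal basis of $L^2(\Sph^{\nu-1})$, we have $\int_{\Sph^{\nu-1}}|f(r\omega)|^2\,d\omega=\sum_{\ell,m}|f_{\ell,m}(r)|^2$; the operator $(-\Delta-z)^{-1}$ is block-diagonal in this decomposition, acting on the degree-$\ell$ block by applying to each $f_{\ell,m}$ the scalar resolvent $R_{\ell,z}$ of the half-line operator $-\partial_r^2-(\nu-1)r^{-1}\partial_r+\ell(\ell+\nu-2)r^{-2}$ on $L^2(\R_+,r^{\nu-1}\,dr)$. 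Since $p<2\nu/(\nu+1)<2<p'$, two applications of Minkowski's inequality --- one in $L^{p'/2}(\R_+,r^{\nu-1}dr)$, which gives $\bigl\|(\sum_{\ell,m}|R_{\ell,z}f_{\ell,m}|^2)^{1/2}\bigr\|_{p'}\le(\sum_{\ell,m}\|R_{\ell,z}f_{\ell,m}\|_{p'}^2)^{1/2}$, and one in the opposite direction, valid because $p\le2$, which gives $(\sum_{\ell,m}\|f_{\ell,m}\|_{p}^2)^{1/2}\le\bigl\|(\sum_{\ell,m}|f_{\ell,m}|^2)^{1/2}\bigr\|_{p}$ --- reduce the theorem to the \emph{scalar} bound
$$
\|R_{\ell,z}\|_{L^p(\R_+,r^{\nu-1}dr)\,\to\,L^{p'}(\R_+,r^{\nu-1}dr)}\le C_{p,\nu}
$$
with $C_{p,\nu}$ independent of $\ell$ and of $\theta$.

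For this scalar estimate I would use that the integral kernel of $R_{\ell,z}$ against $(r')^{\nu-1}\,dr'$ equals a universal constant times $(rr')^{-(\nu-2)/2}\,J_\mu(\sqrt z\,r_<)\,H^{(1)}_\mu(\sqrt z\,r_>)$, where $\mu=\ell+(\nu-2)/2$, $r_<=\min(r,r')$, $r_>=\max(r,r')$, and $\operatorname{Im}\sqrt z>0$. Inserting pointwise bounds for $J_\mu$ and $H^{(1)}_\mu$ that hold uniformly in the order $\mu$ --- treating separately the pre-turning-point regime $t\ll\mu$ (where $J_\mu$ is small and $H^{(1)}_\mu$ large but their product remains controlled), the turning point $t\sim\mu$ (Airy-type behavior), and the oscillatory regime $t\gg\mu$ (decay $\sim t^{-1/2}$) --- yields an $\ell$-independent majorant for the kernel. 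Splitting $(0,\infty)_r\times(0,\infty)_{r'}$ accordingly into near-diagonal and off-diagonal dyadic pieces and estimating each by Schur's test together with one-dimensional Hardy--Littlewood--Sobolev and Stein--Weiss inequalities then gives the claimed $L^p\to L^{p'}$ bound; the restriction $2(\nu+1)/(\nu+3)<p<2\nu/(\nu+1)$ is exactly the range in which the resulting kernel estimates hold with a constant uniform in $\ell$. This is carried out along the lines of Barcelo, Ruiz and Vega \cite{BRV}, and the requisite Bessel estimates are developed in the appendix.

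The scaling reduction and the separation of variables are routine; the only subtlety in the latter is that the two uses of Minkowski's inequality go in opposite directions and both rely on $p\le2\le p'$. The genuine difficulty --- the heart of the matter --- is the scalar radial bound: producing pointwise Bessel estimates that are uniform in the order $\mu$, in particular capturing correctly the transition across the turning point $t\sim\mu$, and then assembling them into a kernel estimate for which the $L^p\to L^{p'}$ inequality holds with a constant independent of $\ell$ and of $\arg z$, in particular uniformly as $z$ approaches $[0,\infty)$, where the exponential decay of $H^{(1)}_\mu(\sqrt z\,\cdot)$ that is available when $\operatorname{Im}\sqrt z$ is bounded away from $0$ no longer helps.
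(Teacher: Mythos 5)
Your overall architecture matches the paper's: expand in spherical harmonics, use Minkowski twice (valid precisely because $p\le 2\le p'$) to reduce the mixed-norm bound to a scalar half-line resolvent estimate that must be uniform in the angular momentum $\ell$, and then attack that scalar estimate via the explicit Bessel/Hankel Green's function together with order-uniform pointwise Bessel bounds. This is exactly what the paper does (Lemma~\ref{mink}, Proposition~\ref{resmixedradial}, Proposition~\ref{bessel}), and your account of why the two Minkowski applications go in opposite directions is correct.

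The gap is in the reduction of the spectral parameter. Scaling $x\mapsto|z|^{1/2}x$ does reduce to $|z|=1$, but that leaves you having to prove the scalar bound for $(h_\ell-e^{i\theta})^{-1}$ with a constant independent of $\ell$ \emph{and of $\theta\in(0,2\pi)$}, and you explicitly flag the $\theta\to 0^+$ regime as ``the heart of the matter'' without resolving it. As written, your plan would force you to prove pointwise bounds on $J_\mu(\sqrt z\,\cdot)$ and $H^{(1)}_\mu(\sqrt z\,\cdot)$ that are uniform both in the order $\mu$ and in the complex phase of $\sqrt z$ as it approaches the real axis --- a substantially harder task than what the appendix (Proposition~\ref{besselbounds}) actually supplies, which are estimates at real argument only. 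The paper avoids this entirely with a Phragm\'en--Lindel\"of argument: for fixed test functions $f,g$ the map $z\mapsto z^{\nu/2-\nu/p+1}\,(g,(-\Delta-z)^{-1}f)$ is analytic in the upper half-plane, continuous to the boundary, and polynomially bounded by the Kenig--Ruiz--Sogge estimate \eqref{eq:keruso}; hence its supremum is attained on $\R$, which reduces matters (after scaling) to the two boundary operators $(-\Delta\mp1-i0)^{-1}$. The case $-1$ is dispatched by Young's inequality, and only for $(-\Delta-1-i0)^{-1}$ does one run the spherical-harmonic and real-argument Bessel machinery. Without some maximum-principle step of this kind your reduction to $|z|=1$ does not by itself deliver the required uniformity in $\arg z$, and the Bessel estimates you invoke from the appendix are not of the right type to supply it.

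A secondary, non-fatal difference: for the $L^p\to L^{p'}$ bound on the scalar kernel you propose Schur's test combined with Hardy--Littlewood--Sobolev and Stein--Weiss, whereas the paper uses the much cruder but sufficient criterion that the kernel lie in $L^{p'}$ of the product space (Lemma~\ref{intop}), which reduces the task to the single double integral controlled in Proposition~\ref{bessel}. Your route could also be made to work but is more elaborate than necessary once the boundary reduction is in place.
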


As explained in the introduction, we think of Theorem \ref{resmixed} as the analogue of the uniform Sobolev bounds by Kenig--Ruiz--Sogge \cite{KRS} which correspond to the range $2\nu/(\nu+2)<p\leq 2(\nu+1)/(\nu+3)$, see \eqref{eq:keruso}. Since uniform resolvent bounds imply Fourier restriction bounds (since $(-\Delta-\lambda-i\epsilon)^{-1} - (-\Delta-\lambda+i\epsilon)^{-1}\to 2\pi i \delta(-\Delta-\lambda)$ as $\epsilon\to 0+$), the Knapp counterexample \cite{St} shows that \eqref{eq:keruso} cannot hold for larger values of $p$. However, as we show, larger values of $p$ can be achieved by considering mixed norm spaces. The use of mixed norm spaces in the context of Fourier restriction bounds seems to have first appeared in Vega \cite{V}, who proved the corresponding restriction inequality in the range $2(\nu+1)/(\nu+3)< p<2\nu/(\nu+1)$ in dimensions $\nu\geq 3$; see also \cite{G} where $\nu=2$ is included as well. Our resolvent bound seems to be new, although our arguments follow closely those of Barcelo--Ruiz--Vega \cite{BRV}, and our assumption $p<2\nu/(\nu+1)$ is optimal, since the results of \cite{G} show that the corresponding Fourier restriction bound does not hold for $p\geq 2\nu/(\nu+1)$.

The following bound will imply Theorem \ref{endpoint}. As we will see, it is a rather straightforward consequence of the main result of \cite{BRV}.

\begin{theorem}\label{brvcor}
Let $\nu\geq 2$ and let $V$ be a non-negative, measurable function with
$$
\|V\|_{L^{\nu,1}(\R_+,r^{\nu-1}\,dr; L^\infty(\Sph^{\nu-1}))} = \int_0^\infty | \{ r>0: \emph{\text{ess-sup}}_{\omega\in\Sph^{\nu-1}} |V(r\omega)| > \tau \} |_\nu^{1/\nu} \,d\tau <\infty \,.
$$
Then, for all $f\in L^2(\R^\nu,V^{-1}\,dx)\cap L^2(\R^\nu)$ and $z\in\C\setminus[0,\infty)$,
$$
\int_{\R^\nu} |(-\Delta-z)^{-1}f|^2 V \,dx \leq C |z|^{-1} \|V\|_{L^{\nu,1}(\R_+,r^{\nu-1}\,dr; L^\infty(\Sph^{\nu-1}))}^2 \int_{\R^\nu} |f|^2 V^{-1} \,dx \,.
$$
\end{theorem}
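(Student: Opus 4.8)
The plan is to pass to the radial majorant of $V$, reducing the claim to a weighted resolvent estimate for radial weights; to invoke for the latter the main weighted bound of \cite{BRV}; and finally to check that the mixed Lorentz norm in the statement controls the weight functional appearing there.

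\emph{Reduction to radial weights and to $|z|=1$.} Put $\tilde V(r):=\text{ess-sup}_{\omega\in\Sph^{\nu-1}}|V(r\omega)|$ and regard $\tilde V$ as a radial function on $\R^\nu$. Since $0\le V\le\tilde V$, replacing $V$ by $\tilde V$ only increases the left side of the asserted inequality; since $\tilde V\ge V$ forces $\tilde V^{-1}\le V^{-1}$, it only decreases the right side; and by the definition of the mixed norm, $\|\tilde V\|_{L^{\nu,1}(\R_+,|\Sph^{\nu-1}|r^{\nu-1}\,dr)}$ equals the norm on the right side of Theorem \ref{brvcor}. It therefore suffices to prove the estimate for a non-negative radial weight $W=W(|x|)$. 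The inequality is moreover invariant under the simultaneous rescaling $W(\cdot)\mapsto W(\lambda\,\cdot)$, $f(\cdot)\mapsto f(\lambda\,\cdot)$, $z\mapsto\lambda^2 z$, since $(-\Delta-\lambda^2 z)^{-1}\big(h(\lambda\,\cdot)\big)=\lambda^{-2}\big((-\Delta-z)^{-1}h\big)(\lambda\,\cdot)$ and $\|W(\lambda\,\cdot)\|_{L^{\nu,1}(\R_+,r^{\nu-1}\,dr)}=\lambda^{-1}\|W\|_{L^{\nu,1}(\R_+,r^{\nu-1}\,dr)}$. Hence we may assume $|z|=1$.

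\emph{The weighted resolvent estimate.} For radial $W$ and $|z|=1$ the bound we want is of exactly the type established in \cite{BRV}: decomposing $f$ and $(-\Delta-z)^{-1}f$ into spherical harmonics turns the resolvent into a direct sum of one-dimensional operators with centrifugal potentials $\ell(\ell+\nu-2)r^{-2}$, and the weighted estimate for each of these, uniform in $\ell\ge 0$ and in $z$ (in particular not deteriorating as $z$ approaches $(0,\infty)$, which is the point of such bounds), follows from precise asymptotics of Bessel functions, as in \cite{BRV} and as recorded in the appendix for all $\nu\ge 2$. The outcome is
\[
\int_{\R^\nu}|(-\Delta-z)^{-1}f|^2\,W\,dx\ \le\ C\,|z|^{-1}\,\mathcal{A}(W)^2\int_{\R^\nu}|f|^2\,W^{-1}\,dx\qquad\text{for radial }W\ge 0,
\]
where $\mathcal{A}(W)$ is the weight functional of \cite{BRV} — a quantity of scaling $\lambda^{-1}$ under $W\mapsto W(\lambda\,\cdot)$ which, for radial $W$, is comparable to $\sup_{R>0}R^{-(\nu-1)}\int_{|x|<R}W\,dx$; this is the form we use below.

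\emph{Lorentz control, and where the work lies.} It remains to show $\mathcal{A}(W)\lesssim\|W\|_{L^{\nu,1}(\R_+,r^{\nu-1}\,dr)}$. By H\"older's inequality in Lorentz spaces,
\[
\int_{|x|<R}W\,dx\ \le\ C\,\|W\|_{L^{\nu,1}}\,\big\|\chi_{\{|x|<R\}}\big\|_{L^{\nu/(\nu-1),\infty}}\ \le\ C'\,R^{\nu-1}\,\|W\|_{L^{\nu,1}},
\]
since the $L^{\nu/(\nu-1),\infty}$ quasi-norm of the indicator of a ball of radius $R$ is $|B_R|^{(\nu-1)/\nu}\sim R^{\nu-1}$; dividing by $R^{\nu-1}$ and taking the supremum over $R>0$ gives the claim. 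The substantive step is thus the second one: to extract from \cite{BRV} the weighted resolvent estimate in the stated normalization, with a weight functional that (a) is dominated by $\|W\|_{L^{\nu,1}(\R_+,r^{\nu-1}\,dr)}$ — it is the endpoint Lorentz structure, not merely $L^\nu$, that keeps the spherical-harmonic pieces summable — and (b) carries a constant uniform in $z\in\C\setminus[0,\infty)$ in every dimension $\nu\ge 2$, with the Bessel asymptotics of the appendix covering the dimensions (notably $\nu=2$) not treated directly in \cite{BRV}. Granted these inputs, the reduction to radial weights and the Lorentz--H\"older bound are routine.
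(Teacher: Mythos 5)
Your overall architecture (pass to the radial majorant, scale, quote the weighted resolvent estimate of \cite{BRV}, then control its weight functional by the $L^{\nu,1}$ norm via H\"older in Lorentz spaces) is the same as the paper's, but there is a genuine gap at the central step: you misidentify the weight functional of \cite{BRV}. The functional in their theorem (Theorem \ref{brv} in the paper) is the radial Mizohata--Takeuchi quantity
$$
\|V\|_{MT}=\sup_{R>0}\int_R^\infty \frac{v(r)\,r}{(r^2-R^2)^{1/2}}\,dr \,,\qquad v(r)=\mathrm{ess\text{-}sup}_{\omega}|V(r\omega)| \,,
$$
i.e.\ the supremum over lines of the line integral of the radial majorant, and it is \emph{not} comparable to the ball-average functional $\mathcal A(W)=\sup_{R>0}R^{-(\nu-1)}\int_{B_R}W\,dx$ that you substitute for it: one has $\mathcal A(W)\lesssim\|W\|_{MT}$ but not the converse. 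Worse, the intermediate estimate you assert, $\int|(-\Delta-z)^{-1}f|^2W\,dx\le C|z|^{-1}\mathcal A(W)^2\int|f|^2W^{-1}dx$ uniformly in $z$, is false: take $W(x)=(1+|x|)^{-1}$, for which $\mathcal A(W)<\infty$ but $\|W\|_{MT}=\infty$. A uniform bound of this form would, by letting $z\to\lambda+i0$ and taking imaginary parts, yield the weighted extension estimate $\int|\widehat{g\,d\sigma}|^2W\,dx\lesssim\mathcal A(W)\|g\|_{L^2(\Sph^{\nu-1})}^2$, whereas $|\widehat{d\sigma}(x)|^2\sim|x|^{-(\nu-1)}\cos^2(|x|-\theta_0)$ makes that integral diverge logarithmically for this $W$; this is consistent with the remark after Theorem \ref{brv} that the MT condition is sharp for radial weights. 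So the ``substantive step'' you defer to \cite{BRV} is not available in the form you need.

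Consequently your final Lorentz--H\"older computation, although correct, proves the wrong comparison ($\mathcal A(W)\lesssim\|W\|_{L^{\nu,1}}$ instead of $\|W\|_{MT}\lesssim\|W\|_{L^{\nu,1}}$). The missing (and in fact the only real) work is exactly this last comparison, which the paper carries out: write $\int_R^\infty v(r)r(r^2-R^2)^{-1/2}dr=\int_0^\infty v(r)\rho_R(r)\,r^{\nu-1}dr$ with $\rho_R(r)=r^{-\nu+2}(r^2-R^2)^{-1/2}\chi_{\{r>R\}}$, apply H\"older in Lorentz spaces, and observe that $\rho_R\in L^{\nu/(\nu-1),\infty}(\R_+,r^{\nu-1}dr)$ with norm independent of $R$ by scaling; pairing against $\chi_{B_R}$ as you do cannot capture the $(r^2-R^2)^{-1/2}$ singularity or the tail of $v$ at infinity. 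Two further, minor points: the radial-majorant reduction and the reduction to $|z|=1$ are unnecessary, since the BRV theorem as quoted already applies to general non-negative $V$ with the stated $|z|^{-1}$ dependence; and the spherical-harmonics/Bessel-asymptotics discussion plays no role here (in the paper those tools enter only in Theorem \ref{resmixed} and the appendix), so invoking them does not substitute for the missing MT--Lorentz comparison.
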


Theorem \ref{evbound} follows from Theorem \ref{resmixed} by Proposition \ref{posevabstract} with the choice $X=L^p(\R_+,r^{\nu-1}\,dr;L^2(\Sph^{\nu-1}))$ in the same way as Theorem \ref{posev} was derived from \eqref{eq:keruso}. Similarly, Theorem \ref{endpoint} follows from Theorem \ref{brvcor} by Proposition \ref{posevabstract}; here we set $X=L^2(w^{-1})$ where $w=\max\{|V|,\delta G\}$, where $G$ is a strictly positive function in $L^{\nu,1}(\R_+,r^{\nu-1}\,dr; L^\infty(\Sph^{\nu-1}))$ (for instance, a Gaussian) and $\delta>0$ is a small parameter. Having $\delta>0$ implies that $L^2\cap L^2(w^{-1})$ is dense in $L^2(w^{-1})$. Moreover, one easily verifies that
$$
\|V\|_{L^2(w)\to L^2(w^{-1})} \leq 1 \,,
$$
so Proposition \ref{posevabstract} yields
$$
1\leq C |z|^{-1} \|\max\{|V|,\delta G\}\|_{L^{\nu,1}(\R_+,r^{\nu-1}\,dr; L^\infty(\Sph^{\nu-1}))}^2
$$
and as $\delta\to 0$ we obtain the claimed bound.

Thus, it remains to prove Theorems \ref{resmixed} and \ref{brvcor}.


\subsection{Proof of Theorem \ref{resmixed}}

It is well known that on spherical harmonics of degree $l\in\N_0$ the operator $-\Delta$ acts as
$$
h_l := -\partial_r^2 - (\nu-1)r^{-1}\partial_r + l(l+\nu-2) r^{-2} \,.
$$
This operator, with an appropriate boundary condition at the origin (coming from the decomposition into spherical harmonics), is self-adjoint in $L^2(\R_+, r^{\nu-1}\,dr)$. It is well-known that the boundary values of the resolvent $(h_l-\lambda-i0)^{-1}$ exist in suitably weighted spaces. The following proposition shows that these boundary values are bounded operators from $L^p(\R_+, r^{\nu-1}\,dr)$ to $L^{p'}(\R_+, r^{\nu-1}\,dr)$. The key observation is that their norms are bounded uniformly in $l\in\N_0$.

\begin{proposition}\label{resmixedradial}
For any $\nu\geq 2$ and $2\nu/(\nu+2)<p<2\nu/(\nu+1)$,
$$
\sup_{l\in\N_0} \left\| (h_l -1-i0)^{-1} \right\|_{L^p(\R_+,r^{\nu-1})\to L^{p'}(\R_+,r^{\nu-1})} <\infty \,.
$$
\end{proposition}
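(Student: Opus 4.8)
The plan is to make the operator explicit. Separating variables, the substitution $u\mapsto r^{(\nu-1)/2}u$ conjugates $h_l$ to the one-dimensional operator $-\partial_r^2+(\mu^2-\tfrac14)r^{-2}$ on $L^2(\R_+,dr)$, with $\mu:=l+\tfrac{\nu-2}2$; its solution regular at the origin is $r^{1/2}J_\mu(r)$ and its outgoing solution at infinity is $r^{1/2}H^{(1)}_\mu(r)$. Undoing the conjugation and computing the Wronskian, one finds that $(h_l-1-i0)^{-1}$ acts on $L^2(\R_+,r^{\nu-1}\,dr)$ by the integral kernel
$$
G_\mu(r,s)=\tfrac{\pi i}{2}\,(rs)^{-(\nu-2)/2}\,J_\mu(r_<)\,H^{(1)}_\mu(r_>),\qquad r_<=\min(r,s),\ r_>=\max(r,s).
$$
Since $\mu$ runs over the discrete set $\{l+\tfrac{\nu-2}2:l\in\N_0\}\subset[\tfrac{\nu-2}2,\infty)$, the proposition reduces to bounding the $L^p(r^{\nu-1})\to L^{p'}(r^{\nu-1})$ norm of this operator uniformly for $\mu\ge(\nu-2)/2$. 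Rewriting the two norms with respect to Lebesgue measure $dr$ (absorbing the factors $r^{(\nu-1)/p}$ and $r^{(\nu-1)/p'}$ into the functions) turns this into a $\mu$-uniform $L^p(dr)\to L^{p'}(dr)$ bound for the operator with kernel $(rs)^{\gamma_0}J_\mu(r_<)H^{(1)}_\mu(r_>)$, where $\gamma_0=\tfrac{\nu-1}{p'}-\tfrac{\nu-2}2$.

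Next I would split $(0,\infty)^2$ according to the position of $r,s$ relative to the turning point $r\approx\mu$, and in each region insert the uniform-in-$\mu$ Bessel estimates from the appendix: exponential decay with a relative gap in the classically forbidden region $r_>\lesssim\mu$; Airy-type bounds of size $\mu^{-1/3}$ on the scale $|r-\mu|\lesssim\mu^{1/3}$; and $|J_\mu(r)|,|H^{(1)}_\mu(r)|\lesssim(r^2-\mu^2)^{-1/4}$ together with the oscillatory asymptotics $J_\mu(r)=\sqrt{2/\pi}\,(r^2-\mu^2)^{-1/4}\cos(\theta_\mu(r)-\tfrac\pi4)+\cdots$ and $H^{(1)}_\mu(r)=\sqrt{2/\pi}\,(r^2-\mu^2)^{-1/4}e^{i(\theta_\mu(r)-\pi/4)}+\cdots$ (with $\theta_\mu(r)=\sqrt{r^2-\mu^2}-\mu\arccos(\mu/r)$, $\theta_\mu'(r)=\sqrt{1-\mu^2/r^2}\in(0,1)$) in the oscillatory region $r_>\gtrsim\mu$. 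In the forbidden region the exponential gap between $J_\mu$ at the smaller argument and $H^{(1)}_\mu$ at the larger one produces an off-diagonal-decaying kernel dominated by a Hardy-type kernel, so Schur's/Hardy's inequality gives the bound with a $\mu$-independent constant (even $L^p\to L^p$ to spare); the box near the turning point has area $\lesssim\mu^{2/3}\times\mu^{2/3}$ and a kernel of controlled size, so a direct Hölder estimate on that box suffices; and the mixed regions (one variable forbidden, one oscillatory) are handled similarly using the smallness of $J_\mu$ in the forbidden part. The finitely many channels with $\mu\lesssim1$, for which these asymptotics degenerate, are treated separately — for these, the bound is contained in the Kenig--Ruiz--Sogge estimate \eqref{eq:keruso} restricted to those channels.

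The heart of the matter is the oscillatory region, where absolute values alone are insufficient (for $\nu=2$, $\mu=0$ the modulus bound would already give an unbounded rank-one-type operator), so one must exploit cancellation. Here I would substitute the two-term asymptotics above. The error terms carry an extra factor $(r^2-\mu^2)^{-1/2}$, so the corresponding operators are again controlled by absolute values as in the easy regions; it thus remains to treat the main term. In it, $\cos(\theta_\mu(r_<)-\tfrac\pi4)\,e^{i(\theta_\mu(r_>)-\pi/4)}$ decomposes into a piece with the symmetric phase $e^{i(\theta_\mu(r)+\theta_\mu(s))}$ and a piece with the phase $e^{i(\theta_\mu(r_>)-\theta_\mu(r_<))}$. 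Because multiplication by $e^{i\theta_\mu(r)}$ is an isometry on every $L^q(dr)$, conjugating by it removes the oscillation: the first piece becomes the rank-one operator $f\mapsto k\,\langle k,f\rangle$ with $k(r)\sim r^{\gamma_0}(r^2-\mu^2)^{-1/4}$ on the oscillatory zone, and the second becomes the Hardy-type operator with kernel $\mathbf{1}_{s<r}\,k(r)k(s)$. Both are then estimated explicitly (using the separate inner-edge, intermediate, and far-out behaviour of $k$), and by scaling $r\mapsto\mu r$ one checks that their $L^p\to L^{p'}$ norms remain bounded as $\mu\to\infty$ in the asserted range $2\nu/(\nu+2)<p<2\nu/(\nu+1)$, which shows the range is optimal. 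Alternatively one may run the $TT^*$/stationary-phase argument of Barcelo--Ruiz--Vega \cite{BRV} on these two pieces.

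I expect this oscillatory region to be the main obstacle: extracting honest cancellation uniformly in $l$ forces the use of the precise Bessel asymptotics rather than mere size bounds, an expansion to enough orders that the errors become harmless, and a careful patching across the transition scale $|r-\mu|\sim\mu^{1/3}$, where $\theta_\mu'$ degenerates and the Airy bounds must replace the stationary-phase asymptotics. Keeping every constant independent of $\mu$ through all of this is the delicate point, and it is precisely what pins down the endpoints of the $p$-range.
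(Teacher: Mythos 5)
Your opening steps match the paper: you correctly write the Sturm--Liouville resolvent kernel $(h_l-1-i0)^{-1}(r,r')=(rr')^{-(\nu-2)/2}J_{\mu_l}(r_<)H^{(1)}_{\mu_l}(r_>)$ with $\mu_l=l+(\nu-2)/2$ and reduce to a bound uniform in $\mu$. From there, however, the paper's argument is far more economical than what you propose, and your central justification for the extra machinery is incorrect.

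The paper's key tool is the elementary Lemma~\ref{intop}, a Minkowski--H\"older bound $\|K\|_{L^p\to L^{p'}}\le\|K\|_{L^{p'}(X\times Y)}$, valid for $1\le p\le 2$. Combined with the pointwise Bessel estimates of Proposition~\ref{besselbounds} (imported from \cite{BRV}), the whole proposition reduces to Proposition~\ref{bessel}: showing that $\int_0^\infty\int_r^\infty|J_\mu|^{p'}|H^{(1)}_\mu|^{p'}(rr')^{-p'(\nu-2)/2+\nu-1}\,dr'\,dr$ is bounded uniformly in $\mu$. No cancellation, no phase decomposition, no conjugation, no stationary phase. Your claim that ``absolute values alone are insufficient (for $\nu=2$, $\mu=0$ the modulus bound would already give an unbounded rank-one-type operator)'' is simply wrong in the asserted range: for $p'>4$ (the $\nu=2$ case) the kernel $(rs)^{1/p'-1/2}\mathbf{1}_{s>r}$ lies in $L^{p'}$ of the product, which is exactly what Lemma~\ref{intop} requires. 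Indeed, your own subsequent estimate undercuts the claim: the rank-one piece you isolate has $L^p\to L^{p'}$ norm $\|a\|_{p'}^2$ with $a(r)\sim r^{\gamma_0}(r^2-\mu^2)^{-1/4}$, and the phase-conjugated Hardy piece has a kernel whose modulus is $a(r)a(s)\mathbf 1_{s<r}$ — in both cases you end up evaluating exactly the modulus integrals that Lemma~\ref{intop} would ask for, so the oscillation contributes nothing. The cancellation you are extracting is genuinely needed only at or beyond the endpoint $p=2\nu/(\nu+1)$, which the proposition excludes.

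One concrete error: you propose to dispose of the finitely many low channels $\mu\lesssim 1$ by invoking the Kenig--Ruiz--Sogge bound \eqref{eq:keruso} ``restricted to those channels.'' But \eqref{eq:keruso} holds only for $p\le 2(\nu+1)/(\nu+3)$, whereas the proposition covers the larger range up to $2\nu/(\nu+1)$; so this reference gives nothing in precisely the new part of the range. The paper instead notes that for $0\le\mu\le 1/2$ the required double integral is finite by standard Bessel asymptotics — a direct computation, not a restriction argument. Apart from this, your outline could be pushed through, but it rediscovers by a roundabout path the same modulus estimate that the paper obtains in two lines.
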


To prove this proposition we use the following simple criterion for the boundedness of an integral operator from $L^p$ to $L^{p'}$.

\begin{lemma}\label{intop}
Let $X$ and $Y$ be measure spaces and $k\in L^{p'}(X\times Y)$ for some $1\leq p\leq 2$. Then $(kf)(y) = \int_X k(x,y) f(x) \,dx$ defines a bounded operator from $L^p(X)$ to $L^{p'}(Y)$ with
$$
\|k\|_{L^p(X)\to L^{p'}(Y)} \leq \|k\|_{L^{p'}(X\times Y)} \,.
$$
\end{lemma}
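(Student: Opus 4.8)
The final statement to prove is Lemma~\ref{intop}: if $k\in L^{p'}(X\times Y)$ with $1\le p\le 2$, then the integral operator with kernel $k$ maps $L^p(X)$ to $L^{p'}(Y)$ with norm at most $\|k\|_{L^{p'}(X\times Y)}$.

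\medskip

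The plan is to use Minkowski's integral inequality together with H\"older's inequality, exploiting that $p\le 2\le p'$. First I would fix $f\in L^p(X)$ and estimate $\|kf\|_{L^{p'}(Y)}$ directly. Writing out the $L^{p'}(Y)$ norm and applying Minkowski's integral inequality in the form $\|\int_X k(x,\cdot)f(x)\,dx\|_{L^{p'}(Y)} \le \int_X |f(x)|\,\|k(x,\cdot)\|_{L^{p'}(Y)}\,dx$ reduces the problem to bounding $\int_X |f(x)|\,\phi(x)\,dx$ where $\phi(x):=\|k(x,\cdot)\|_{L^{p'}(Y)}$. By H\"older's inequality with exponents $p$ and $p'$, this is at most $\|f\|_{L^p(X)}\,\|\phi\|_{L^{p'}(X)}$. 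Finally, $\|\phi\|_{L^{p'}(X)}^{p'} = \int_X \|k(x,\cdot)\|_{L^{p'}(Y)}^{p'}\,dx = \int_X\int_Y |k(x,y)|^{p'}\,dy\,dx = \|k\|_{L^{p'}(X\times Y)}^{p'}$ by Fubini--Tonelli (everything is nonnegative), which gives exactly $\|kf\|_{L^{p'}(Y)}\le \|k\|_{L^{p'}(X\times Y)}\|f\|_{L^p(X)}$.

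\medskip

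The one point requiring a word of care is the applicability of Minkowski's integral inequality: it is valid precisely because the outer exponent $p'$ satisfies $p'\ge 1$, which holds for all $p\le 2$ (indeed for all $p\ge 1$); and one should note that $k(x,\cdot)\in L^{p'}(Y)$ for a.e.\ $x$, with $\|k(x,\cdot)\|_{L^{p'}(Y)}$ measurable in $x$, both of which follow from Tonelli's theorem applied to $|k|^{p'}$. The integrand $|f(x)|\,\|k(x,\cdot)\|_{L^{p'}(Y)}$ is then integrable by the H\"older step, which in particular justifies the a.e.\ finiteness of $(kf)(y)$ and shows $kf\in L^{p'}(Y)$.

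\medskip

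I do not anticipate any real obstacle here; the lemma is elementary and the only ``content'' is the combination Minkowski $+$ H\"older $+$ Fubini, with the hypothesis $p\le 2$ entering only through $p'\ge 2\ge 1$ (so that Minkowski's inequality applies) — in fact the statement as written holds for any $1\le p<\infty$, and the restriction $p\le 2$ in the lemma is presumably just the range in which it will be used. I would state the three steps in the order above and leave the routine measurability and Fubini verifications to the reader.
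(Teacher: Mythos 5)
Your proof is correct and follows essentially the same route as the paper: Minkowski's integral inequality to pull the $L^{p'}(Y)$ norm inside the $x$-integral, then H\"older with exponents $p,p'$, and Fubini--Tonelli to identify $\bigl(\int_X \|k(x,\cdot)\|_{L^{p'}(Y)}^{p'}\,dx\bigr)^{1/p'}$ with $\|k\|_{L^{p'}(X\times Y)}$. Your side remarks (measurability via Tonelli, and that the hypothesis $p\le 2$ is not actually needed for this lemma) are accurate.
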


\begin{proof}[Proof of Lemma \ref{intop}]
By Minkowski's and H\"older's inequality
\begin{align*}
\|kf\|_{p'}^{p'} & = \int_Y \left| \int_X k(x,y) f(x) \,dx \right|^{p'} \,dy \\
& \leq \left( \int_X \left( \int_Y |k(x,y)|^{p'} \,dy \right)^{1/p'} |f(x)| \,dx \right)^{p'} \\
& \leq \left( \int_X \int_Y |k(x,y)|^{p'} \,dy \,dx \right) \left( \int_X |f(x)|^p \,dx \right)^{p'/p} \,, 
\end{align*}
which yields the claimed inequality.
\end{proof}

Modulo a technical result about Bessel functions (Proposition \ref{bessel}), which we prove in the appendix, we now give the

\begin{proof}[Proof of Proposition \ref{resmixedradial}]
According to Sturm--Liouville theory $(h_l-1-i0)^{-1}$ is an integral operator with integral kernel
$$
(h_l-1-i0)^{-1}(r,r') = (rr')^{-(\nu-2)/2} J_{\mu_l}(\min\{r,r'\}) H_{\mu_l}^{(1)}(\max\{r,r'\}) \,,
$$
where $J_{\mu_l}$ and $H^{(1)}_{\mu_l}$ are Bessel and Hankel functions, respectively, and where $\mu_l=l+(\nu-2)/2$. Thus, by Lemma \ref{intop},
\begin{align*}
& \left\| (h_l-1-i0)^{-1} \right\|^{p'}_{L^p(\R_+,r^{\nu-1})\to L^{p'}(\R_+,r^{\nu-1})} \\ 
&\qquad \leq 2 \int_0^\infty \int_r^\infty |J_{\mu_l}(r)|^{p'} |H_{\mu_l}(r')|^{p'} (rr')^{-p'(\nu-2)/2 + \nu-1} \,dr'\,dr \,.
\end{align*}
The fact that the right side is finite and uniformly bounded in $l$ follows from Proposition \ref{bessel} in the appendix with $q=p'$. This completes the proof of the proposition.
\end{proof}

In order to deduce Theorem \ref{resmixed} from Proposition \ref{resmixedradial} we need the following general result. 

\begin{lemma}\label{mink}
Let $X$ and $Y$ be measure spaces and $1\leq p\leq 2$. Let $(K_j)$ be a sequence of bounded operators from $L^p(X)$ to $L^{p'}(Y)$. Let $\mathcal H$ be a separable Hilbert space with an orthonormal basis $(e_j)$ and define a linear operator $K$ by
$$
K (f\otimes e_j) = (K_jf) \otimes e_j \qquad
\text{for all}\ f\in L^p(X) \ \text{and all}\ j \,.
$$
Then $K$ is bounded from $L^p(X,\mathcal H)$ to $L^{p'}(Y,\mathcal H)$ with
$$
\| K \|_{L^p(X,\mathcal H)\to L^{p'}(Y,\mathcal H)} = \sup_j \|K_j\|_{L^p(X)\to L^{p'}(Y)} \,.
$$
\end{lemma}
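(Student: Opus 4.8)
The plan is to prove the two inequalities $\|K\|_{L^p(X,\mathcal H)\to L^{p'}(Y,\mathcal H)} \leq \sup_j \|K_j\|$ and $\geq \sup_j\|K_j\|$ separately, the second being trivial (test $K$ on $f\otimes e_j$ for a single $j$ and note that the $\mathcal H$-valued norms reduce to the scalar norms on such elementary tensors). So the substance is the upper bound, which will follow the same Minkowski-inequality scheme as Lemma \ref{intop}, now applied in the Hilbert-space-valued setting.

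First I would set up notation: for $g\in L^p(X,\mathcal H)$ write $g(x) = \sum_j g_j(x)\, e_j$ with $g_j\in L^p(X)$, and by definition $(Kg)(y) = \sum_j (K_j g_j)(y)\, e_j$, so that $\|(Kg)(y)\|_{\mathcal H}^2 = \sum_j |(K_j g_j)(y)|^2$. The goal is to bound $\int_Y \|(Kg)(y)\|_{\mathcal H}^{p'}\,dy$. The key inequality is $\|\,\cdot\,\|_{\ell^2}\leq\|\,\cdot\,\|_{\ell^{p'}}$ for $p'\geq 2$, which gives $\|(Kg)(y)\|_{\mathcal H}^{p'} = \big(\sum_j |(K_j g_j)(y)|^2\big)^{p'/2} \leq \sum_j |(K_j g_j)(y)|^{p'}$. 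Integrating over $Y$ and using the scalar bound $\|K_j g_j\|_{L^{p'}(Y)}^{p'}\leq (\sup_k\|K_k\|)^{p'}\|g_j\|_{L^p(X)}^{p'}$ yields
$$
\|Kg\|_{L^{p'}(Y,\mathcal H)}^{p'} \leq \Big(\sup_k\|K_k\|\Big)^{p'} \sum_j \|g_j\|_{L^p(X)}^{p'} .
$$
It then remains to compare $\sum_j \|g_j\|_{L^p(X)}^{p'}$ with $\|g\|_{L^p(X,\mathcal H)}^{p'} = \big(\int_X \big(\sum_j |g_j(x)|^2\big)^{p/2}\,dx\big)^{p'/p}$. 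Since $p'/p\geq 1$, Minkowski's inequality in $L^{p'/p}(X)$ applied to the functions $x\mapsto |g_j(x)|^2$ gives $\big\|\sum_j |g_j|^2\big\|_{L^{p'/p}(X)} \leq \sum_j \big\||g_j|^2\big\|_{L^{p'/p}(X)} = \sum_j \|g_j\|_{L^{2p'/p}(X)}^2$; but wait --- this is not quite the exponent I want. The clean route is instead: by the reverse triangle / Minkowski inequality for the $L^{p/p'}$-``norm'' when $p\leq p'$, one has $\big(\int_X (\sum_j |g_j|^2)^{p/2}\big)^{2/p} \geq \sum_j \big(\int_X |g_j|^p\big)^{2/p}$, i.e. $\|g\|_{L^p(X,\mathcal H)}^2 \geq \sum_j \|g_j\|_{L^p(X)}^2$, and then $\big(\sum_j\|g_j\|_p^2\big)^{p'/2}\geq \sum_j\|g_j\|_p^{p'}$ since $p'/2\geq 1$. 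Chaining these: $\sum_j\|g_j\|_{L^p(X)}^{p'} \leq \|g\|_{L^p(X,\mathcal H)}^{p'}$, which closes the argument.

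The main obstacle is getting the direction of Minkowski's inequality right in the step comparing $\sum_j\|g_j\|_{L^p(X)}^2$ with $\|g\|_{L^p(X,\mathcal H)}^2$: because $p\leq 2\leq p'$, the ``norm'' $\|\cdot\|_{L^{p/2}}$ has exponent less than one, so the relevant inequality is the \emph{reverse} Minkowski inequality (superadditivity of the $L^{p/2}$-functional on nonnegative functions), which is why the inequality points the way it does; a sign error here would give a false statement. Once the bookkeeping of exponents is pinned down, everything is elementary. I would also remark that the same computation handles the case $p=2$ trivially (then $p'=2$ and $K$ is just the orthogonal direct sum of the $K_j$, with norm exactly the supremum), and that separability of $\mathcal H$ is used only to index the basis by $j\in\N$ so that the sums above are countable; no measurability subtlety beyond the obvious one (that $x\mapsto g_j(x)$ are measurable scalar functions and $g\mapsto (g_j)_j$ is the coordinate isometry) arises.
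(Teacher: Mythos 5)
Your proof of the lower bound $\|K\|\geq\sup_j\|K_j\|$ and your handling of the second half of the upper bound (the reverse Minkowski inequality in $L^{p/2}(X)$, yielding $\sum_j\|g_j\|_{L^p(X)}^2\leq\|g\|_{L^p(X,\mathcal H)}^2$) are both correct. But the very first step of the upper bound is wrong, and ironically it is exactly the kind of sign error you warn about later. You assert that $\|\cdot\|_{\ell^2}\leq\|\cdot\|_{\ell^{p'}}$ for $p'\geq 2$; the $\ell^q$ norms are \emph{decreasing} in $q$, so the inequality runs the other way. Consequently the pointwise bound $\big(\sum_j|(K_jg_j)(y)|^2\big)^{p'/2}\leq\sum_j|(K_jg_j)(y)|^{p'}$ is false for $p'>2$: take $p'=4$ and two equal nonzero entries $a_1=a_2=1$, giving $4\not\leq 2$. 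There is no pointwise estimate of this shape that works, because the ratio $(\sum a_j^2)^{p'/2}/\sum a_j^{p'}$ is unbounded in the number of nonzero terms.

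The repair is not a pointwise comparison but Minkowski's \emph{integral} inequality applied in $L^{p'/2}(Y)$ (legitimate since $p'/2\geq 1$), which is what the paper does: treating $\sum_j|K_jg_j|^2$ as a sum of nonnegative functions,
\begin{align*}
\int_Y\Big(\sum_j|(K_jg_j)(y)|^2\Big)^{p'/2}dy
&=\Big\|\sum_j|K_jg_j|^2\Big\|_{L^{p'/2}(Y)}^{p'/2}
\leq\Big(\sum_j\big\||K_jg_j|^2\big\|_{L^{p'/2}(Y)}\Big)^{p'/2}\\
&=\Big(\sum_j\|K_jg_j\|_{L^{p'}(Y)}^2\Big)^{p'/2}
\leq\big(\sup_k\|K_k\|\big)^{p'}\Big(\sum_j\|g_j\|_{L^p(X)}^2\Big)^{p'/2}.
\end{align*}
This feeds directly into the $L^{p/2}$ reverse-Minkowski step you already have, so your second half is fine as written; you should also drop the intermediate comparison $\big(\sum_j\|g_j\|_p^2\big)^{p'/2}\geq\sum_j\|g_j\|_p^{p'}$, which is no longer needed once the first step is done correctly.
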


\begin{proof}[Proof of Lemma \ref{mink}]
Since
\begin{align*}
\|K(f\otimes e_j)\|_{L^{p'}(Y,\mathcal H)} & = \|K_j f\|_{L^{p'}(Y)} 
\leq \|K_j\|_{L^p(X)\to L^{p'}(Y)} \|f\|_{L^{p}(X)} \\
& = \|K_j\|_{L^p(X)\to L^{p'}(Y)} \|f\otimes e_j\|_{L^{p}(Y,\mathcal H)} \,,
\end{align*}
we have $\|K\| \leq \sup \|K_j\|$ (with obvious indices). To prove the opposite bound we write $F=\sum f_j\otimes e_j$, so that
$$
\|KF\|_{L^{p'}(Y,\mathcal H)}^{p'} = \int_Y \left( \sum |(K_jf_j)(y)|^2 \right)^{p'/2} \,dy \,.
$$
Since $p'\geq 2$ we can bound this from above using Minkowski's inequality by
$$
\left( \sum \left( \int_Y |(K_jf_j)(y)|^{p'} \,dy \right)^{2/p'} \right)^{p'/2} \,,
$$
which in turn is bounded from above by
$$
\left( \sum \|K_j\|^2 \left( \int_X |f_j(x)|^{p} \,dx \right)^{2/p} \right)^{p'/2} 
\leq \left( \sup \|K_j\| \right)^{p'} \left( \sum \left( \int_X |f_j(x)|^{p} \,dx \right)^{2/p} \right)^{p'/2}  \,.
$$
Once again by Minkowski's inequality, using the fact that $p\leq 2$,
$$
\sum \left( \int_X |f_j(x)|^{p} \,dx \right)^{2/p} \leq \left( \int_X \left( \sum |f_j(x)|^2 \right)^{p/2} \,dx  \right)^{2/p} = \|F\|_{L^{p}(X,\mathcal H)}^2 \,.
$$
This proves that $\|KF\|_{L^{p'}(Y,\mathcal H)} \leq \left( \sup \|K_j\| \right) \|F\|_{L^{p}(X,\mathcal H)}$, as claimed.
\end{proof}

We are finally in position to give the

\begin{proof}[Proof of Theorem \ref{resmixed}]
Let $\nu\geq 2$ and $2(\nu+1)/(\nu+3)<p<2\nu/(\nu+1)$. (In fact, the proof works also for $2\nu/(\nu+2)<p\leq 2(\nu+1)/(\nu+3)$, but the inequality we obtain in that case is weaker than \eqref{eq:keruso}.) We begin with a well-known argument reducing the proof to the case $z=1$. For $f,g\in C_0^\infty(\R^\nu)$,
$$
z\mapsto z^{\nu/2-\nu/p+1} (g, (-\Delta-z)^{-1} f)
$$
is an analytic function in $\{\im z>0\}$, continuous up to the boundary, and satisfying
$$
|z|^{\nu/2-\nu/p+1} |(g, (-\Delta-z)^{-1} f)| \leq C_{r,\nu} |z|^{\alpha} \|f\|_r \|g\|_r
$$
for every $2\nu/(\nu+2)<r\leq 2(\nu+1)/(\nu+3)$ and a certain $\alpha$ depending on $r$. This follows from the Kenig--Ruiz--Sogge bound \eqref{eq:keruso}. Thus, by the Phragm\'en--Lindel\"of principle, 
$$
\sup_{\im z>0} |z|^{\nu/2-\nu/p+1} |(g, (-\Delta-z)^{-1} f)| = \sup_{\lambda\in\R} |\lambda|^{\nu/2-\nu/p+1} |(g, (-\Delta-\lambda-i0)^{-1} f)| \,.
$$
If we can show that the right side is bounded by $C_{p,\nu} \|f\|_{L^p(L^2)} \|g\|_{L^p(L^2)}$ (with the abbreviation $L^p(L^2)= L^p(\R_+,r^{\nu-1}\,dr;L^2(\Sph^{\nu-1}))$), then, by density, the bound will be valid for any $f,g\in L^p(L^2)$. Moreover, since 
$$
(g, (-\Delta-\overline z)^{-1} f) = ((-\Delta-z)^{-1} g, f) = \overline{(f,(-\Delta-z)^{-1} g)} \,,
$$
we will have shown the bound claimed in the theorem.

By scaling it suffices to prove the bound
\begin{equation}
\label{eq:resmixedproof}
|\lambda|^{\nu/2-\nu/p+1} |(g, (-\Delta-\lambda-i0)^{-1} f)| 
\leq C_{p,\nu} \|f\|_{L^p(L^2)} \|g\|_{L^p(L^2)}
\end{equation}
for $\lambda=\pm 1$ only. We begin with $\lambda=-1$. Since $(-\Delta+1)^{-1}$ is convolution with a function in $L^q$ for any $q<\nu/(\nu-2)$, Young's inequality yields
$$
|(g, (-\Delta-\lambda-i0)^{-1} f)| 
\leq C_{p,\nu}' \|f\|_{L^p} \|g\|_{L^p}
$$
for any $p>2\nu/(\nu+2)$. Since 
$$
\|f\|_{L^p} \leq |\Sph^{\nu-1}|^{(2-p)/2p} \ \|f\|_{L^p(L^2)}
$$
for $p\leq 2$, this bound for $\lambda=-1$ is stronger than what we shall prove for $\lambda=1$.

Therefore we have reduced the proof to showing \eqref{eq:resmixedproof} for $2(\nu+1)/(\nu+3)<p<2\nu/(\nu+1)$ and $\lambda=1$. This is the same as
$$
\|(-\Delta-1-i0)^{-1} f\|_{L^{p'}(L^2)} \leq C_{p,\nu} \|f\|_{L^p(L^2)} \,.
$$
To do so, we expand $f$ with respect to spherical harmonics $(Y_{l,m})$, with $l\in\N_0$ and $m$ running through a certain index set of cardinality depending on $l$,
$$
f(x) = \sum_{l,m} f_{l,m}(|x|) Y_{l,m}(x/|x|) \,,
$$
so that
$$
\int_0^\infty \left( \int_{\Sph^{\nu-1}} |f(r\omega)|^2 \,d\omega \right)^{p/2} r^{\nu-1} \,dr
= \int_0^\infty \left( \sum_{l,m} |f_{l,m}(r)|^2 \right)^{p/2} r^{\nu-1} \,dr \,.
$$
Separation of variables shows that
$$
\left( (-\Delta-1-i0)^{-1} f \right)(x) = \sum_{lm} \left( (h_l-1-i0)^{-1} f_{lm}\right)(|x|)\, Y_{lm}(x/|x|) \,,
$$
where $h_l$ was defined at the beginning of this subsection. By Lemma \ref{mink} we have
$$
\|(-\Delta-1-i0)^{-1}\|_{L^p(L^2)\to L^{p'}(L^2)} = \sup_{l\in\N_0} \| (h_l-1+i0)^{-1} \|_{L^p\to L^{p'}} \,.
$$
The right hand side is finite by Proposition \ref{resmixed}. This completes the proof of the theorem.
\end{proof}


\subsection{Proof of Theorem \ref{brvcor}}

We shall deduce Theorem \ref{brvcor} from the following theorem of Barcelo, Ruiz and Vega \cite{BRV}. They introduce the following norm,
$$
\|V\|_{MT} = \sup_{R>0} \int_R^\infty \frac{\text{ess-sup}_{\omega\in\Sph^{\nu-1}}|V(r\omega)|\, r}{(r^2-R^2)^{1/2}} \,dr <\infty \,.
$$

\begin{theorem}\label{brv}
Let $\nu\geq 2$ and let $V$ be a non-negative, measurable function with $\|V\|_{MT}<\infty$. Then, for all $f\in L^2(\R^\nu,V^{-1}\,dx)\cap L^2(\R^\nu)$ and $z\in\C\setminus[0,\infty)$,
$$
\int_{\R^\nu} |(-\Delta-z)^{-1}f|^2 V \,dx \leq C |z|^{-1} \|V\|_{MT}^2 \int_{\R^\nu} |f|^2 V^{-1} \,dx \,.
$$
\end{theorem}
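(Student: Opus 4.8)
Since Theorem~\ref{brv} is due to Barcelo, Ruiz and Vega \cite{BRV}, we only indicate how one would prove it. The plan is to recast the inequality as a $TT^*$ bound, reduce by scaling to $|z|=1$, reduce via the spherical harmonic decomposition to a \emph{uniform} family of one-dimensional weighted bounds for the operators $h_l$, and finally establish the latter from the explicit resolvent kernel in terms of Bessel and Hankel functions together with uniform pointwise bounds on these functions.

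Setting $f=V^{1/2}g$, the asserted inequality is equivalent to the operator-norm bound
$$
\bigl\| V^{1/2}(-\Delta-z)^{-1}V^{1/2} \bigr\|_{L^2(\R^\nu)\to L^2(\R^\nu)} \leq C\,|z|^{-1/2}\,\|V\|_{MT} \,.
$$
A parabolic rescaling $x\mapsto|z|^{1/2}x$, under which $\|V(|z|^{-1/2}\,\cdot)\|_{MT}=|z|^{1/2}\|V\|_{MT}$, reduces matters to $|z|=1$; we would treat all such $z\notin[0,\infty)$ at once, the most delicate case being $z=1+i0$, the condition $\im\sqrt z>0$ serving only to help.

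Next we exploit radiality. Since $V\leq V_0(|x|)$ pointwise, with $V_0(r):=\text{ess-sup}_{\omega\in\Sph^{\nu-1}}|V(r\omega)|$, we have $V^{-1}\geq V_0(|x|)^{-1}$, and $\|V\|_{MT}$ depends on $V$ only through $V_0$; indeed the substitution $r=(R^2+s^2)^{1/2}$ shows $\|V\|_{MT}=\sup_{R>0}\int_0^\infty V_0((R^2+s^2)^{1/2})\,ds$, that is, up to a constant, the supremum over affine lines $\ell$ of $\int_\ell V_0$. Expanding $f$ into spherical harmonics and using that $(-\Delta-z)^{-1}$ preserves this decomposition, the inequality would follow from the family of one-dimensional bounds
$$
\int_0^\infty V_0(r)\,|((h_l-z)^{-1}\varphi)(r)|^2\,r^{\nu-1}\,dr \leq C\,|z|^{-1}\,\|V\|_{MT}^2 \int_0^\infty V_0(r)^{-1}\,|\varphi(r)|^2\,r^{\nu-1}\,dr \,,
$$
proved \emph{uniformly in} $l\in\N_0$, after which one sums over $l$ and its multiplicity index. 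Conjugating $h_l$ by $r^{(\nu-1)/2}$ turns it into the Bessel operator $L_\mu=-\partial_r^2+(\mu^2-\tfrac14)r^{-2}$ on $L^2(\R_+,dr)$, with $\mu=\mu_l=l+(\nu-2)/2$, whose resolvent (for $|z|=1$, $z\notin[0,\infty)$) has kernel $(rr')^{1/2}J_\mu(\sqrt z\,r_<)H^{(1)}_\mu(\sqrt z\,r_>)$, as in Section~\ref{sec:positive}. The remaining task is the weighted $L^2$ bound for this kernel, which we would attack by a Schur test (or directly) using weights adapted to the turning point $r\approx\mu$ and matching the oscillatory envelope of $J_\mu H^{(1)}_\mu$ to the line-integral structure of $\|\cdot\|_{MT}$.

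The main obstacle is precisely this last step. The separate pointwise bounds for $J_\mu$ and $H^{(1)}_\mu$ degenerate (for $H^{(1)}_\mu$ as $r\to0$, and for both as $\mu\to\infty$ with $r/\mu$ fixed in $(0,1)$), and only the \emph{product} $J_\mu(r_<)H^{(1)}_\mu(r_>)$ is controlled uniformly in $\mu$; one must use the decay of $J_\mu$ in the classically forbidden region $r\lesssim\mu$, the Airy-type bound $|J_\mu(\mu)|\lesssim\mu^{-1/3}$ near the turning point, and the envelope $\sim(r^2-\mu^2)^{-1/4}$ for $r\gtrsim\mu$, combined so as to reproduce exactly the weight against which $\|\cdot\|_{MT}$ tests. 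This is the same kind of uniform Bessel estimate --- in a different range of exponents --- as the one underlying Proposition~\ref{bessel} of the appendix, and the technical details are carried out in \cite{BRV}.
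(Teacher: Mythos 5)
The paper gives no proof of Theorem \ref{brv} at all: it is quoted as an external result of Barcelo, Ruiz and Vega \cite{BRV}, so deferring the technical core to that reference is exactly what the paper itself does. Your reductions (rewriting the bound as the operator-norm estimate for $V^{1/2}(-\Delta-z)^{-1}V^{1/2}$, scaling to $|z|=1$, decomposing into spherical harmonics, and reducing to uniform-in-$l$ weighted bounds for the Bessel--Hankel resolvent kernel) are correct and faithfully describe the method of \cite{BRV}, so your proposal is in line with the paper's treatment.
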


Barcelo, Ruiz and Vega call $\|V\|_{MT}<\infty$ the `radial Mizohata--Takeuchi' condition, thus the subscript `MT'. They show that for radial $V$ this condition is, in fact, also necessary to have a bound of the form $\|u\|_{L^2(V)} \leq C|z|^{-1/2} \|(-\Delta-z)u\|_{L^2(V)}$.

\begin{proof}[Proof of Theorem \ref{brvcor}]
By Theorem \ref{brv} it suffices to show that for any $\nu\geq 2$,
\begin{equation}
\label{eq:brvcorproof}
\|V\|_{MT} \leq C_\nu \| V \|_{L^{\nu,1}(\R_+,r^{\nu-1},L^\infty(\Sph^{\nu-1}))} \,.
\end{equation}
Let $\rho_R(r) := r^{-\nu+2} (r^2-R^2)^{-1/2}\chi_{\{r>R\}}$. Then, by H\"older's inequality in Lorentz spaces, with $v(r):= \text{ess-sup}_{\omega\in\Sph^{\nu-1}}|V(r\omega)|$,
\begin{align*}
\int_R^\infty \frac{\text{ess-sup}_{\omega\in\Sph^{\nu-1}}|V(r\omega)|\, r}{(r^2-R^2)^{1/2}} \,dr
& = \int_0^\infty v(r) \rho_R(r) r^{\nu-1}\,dr \\
& \leq C \|v\|_{L^{\nu,1}(\R_+,r^{\nu-1})} \|\rho_R\|_{L^{\nu/(\nu-1),\infty}(\R_+,r^{\nu-1})} \\
& = C \| V \|_{L^{\nu,1}(\R_+,r^{\nu-1},L^\infty(\Sph^{\nu-1}))} \|\rho_1\|_{L^{\nu/(\nu-1),\infty}(\R_+,r^{\nu-1})}\,,
\end{align*}
where we used that, by scaling, $\|\rho_R\|_{L^{\nu/(\nu-1),\infty}(\R_+,r^{\nu-1})} = \|\rho_1\|_{L^{\nu/(\nu-1),\infty}(\R_+,r^{\nu-1})}$. One easily checks that $\rho_1 \in L^{\nu/(\nu-1),\infty}(\R_+,r^{\nu-1})$, which, after taking the supremeum over $R>0$, yields \eqref{eq:brvcorproof}.
\end{proof}

The next corollary contains further eigenvalue bounds which are consequences of Theorem \ref{brv}.

\begin{corollary}\label{evbrv}
Let $E\in\C$ be an eigenvalue of $-\Delta+V$ in $L^2(\R^\nu)$. Then
\begin{equation}
\label{eq:evbrv}
|E|^{1/2} \leq C_\nu \,\|V\|_{MT} \,.
\end{equation}
Moreover, for any $p\in(2,\infty]$,
\begin{equation}
\label{eq:evbrvp}
|E|^{1/2} \leq C_{p,\nu} \sum_{j\in\Z} \left( \int_{2^j}^{2^{j+1}} \|V(r\,\cdot)\|_{L^\infty(\Sph^{\nu-1})}^p r^{p-1} \,dr \right)^{1/p} \,.
\end{equation}
\end{corollary}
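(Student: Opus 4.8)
The plan is to derive \eqref{eq:evbrv} from Theorem \ref{brv} via Proposition \ref{posevabstract} --- in complete analogy with the derivation of Theorem \ref{endpoint} from Theorem \ref{brvcor} --- and then to obtain \eqref{eq:evbrvp} from \eqref{eq:evbrv} by bounding the $MT$-norm by the right-hand side of \eqref{eq:evbrvp}.

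For \eqref{eq:evbrv} we may assume $\|V\|_{MT}<\infty$, since otherwise there is nothing to prove. Set $w:=\max\{|V|,\delta G\}$, where $\delta>0$ and $G$ is a fixed strictly positive radial function with $\|G\|_{MT}<\infty$ (a Gaussian works, as one checks directly). Apply Proposition \ref{posevabstract} with $X=L^2(\R^\nu,w^{-1}\,dx)$, so that $X^*=L^2(\R^\nu,w\,dx)$ and $L^2\cap X$ is dense in $X$ because $\delta>0$; the a priori bound $\psi\in H^1(\R^\nu)$ on the eigenfunction together with $\|V\|_{MT}<\infty$ places $\psi$ in $X^*$, as for Theorem \ref{endpoint}. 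Since $w\geq|V|$ pointwise we have $\|V\|_{X^*\to X}\leq1$, and Theorem \ref{brv} applied with $V$ replaced by $w$ gives \eqref{eq:resboundabstract} with $N(z)=C^{1/2}|z|^{-1/2}\|w\|_{MT}$ (and $I=\{0\}$, exactly as for Theorem \ref{endpoint}). Hence Proposition \ref{posevabstract} yields $1\leq C^{1/2}|E|^{-1/2}\|w\|_{MT}\leq C^{1/2}|E|^{-1/2}(\|V\|_{MT}+\delta\|G\|_{MT})$, and letting $\delta\to0$ proves \eqref{eq:evbrv}.

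It remains to show, for $p\in(2,\infty]$ and with $v(r):=\|V(r\,\cdot)\|_{L^\infty(\Sph^{\nu-1})}$, that
\[
\|V\|_{MT}=\sup_{R>0}\int_R^\infty\frac{v(r)\,r}{(r^2-R^2)^{1/2}}\,dr\ \leq\ C_{p,\nu}\sum_{j\in\Z}\left(\int_{2^j}^{2^{j+1}}v(r)^p\,r^{p-1}\,dr\right)^{1/p}.
\]
Fix $R>0$ and split the integral at $r=2R$. On $(2R,\infty)$ one has $(r^2-R^2)^{1/2}\geq(\sqrt3/2)r$, so that piece is at most $(2/\sqrt3)\int_0^\infty v(r)\,dr$; on each dyadic annulus H\"older's inequality (with dual weight $r^{-1}$, which integrates to $\ln2$) gives $\int_{2^j}^{2^{j+1}}v\,dr\leq(\ln2)^{1/p'}\big(\int_{2^j}^{2^{j+1}}v^p r^{p-1}\,dr\big)^{1/p}$, and summing over $j$ bounds this piece by the right-hand side. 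For the piece over $(R,2R)$, H\"older's inequality gives
\[
\int_R^{2R}\frac{v(r)\,r}{(r^2-R^2)^{1/2}}\,dr\leq\left(\int_R^{2R}v(r)^p r^{p-1}\,dr\right)^{1/p}\left(\int_R^{2R}\frac{r^{p'/p}}{(r^2-R^2)^{p'/2}}\,dr\right)^{1/p'},
\]
and using $r\asymp R$ and $r^2-R^2\asymp R(r-R)$ on $(R,2R)$ the last integral is comparable to $R^{p'/p-p'/2}\int_0^R s^{-p'/2}\,ds$, which is finite exactly because $p>2$ (i.e. $p'<2$) and, since $p'/p=p'-1$, equals a constant depending only on $p$ (the power of $R$ being $p'/p-p'/2+1-p'/2=p'/p-p'+1=0$). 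So the $(R,2R)$-piece is $\leq C_p\big(\int_R^{2R}v^p r^{p-1}\,dr\big)^{1/p}$; since $(R,2R)$ meets at most two dyadic annuli and $x\mapsto x^{1/p}$ is subadditive, this is again $\leq C_p\sum_{j\in\Z}\big(\int_{2^j}^{2^{j+1}}v^p r^{p-1}\,dr\big)^{1/p}$. Taking the supremum over $R$ proves the claim; the case $p=\infty$ is identical, with $p'=1$ and $\big(\int_{2^j}^{2^{j+1}}v^p r^{p-1}\,dr\big)^{1/p}$ read as $\operatorname{ess\,sup}_{2^j<r<2^{j+1}}v(r)\,r$.

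I expect the only delicate point to be this scale-invariance computation on $(R,2R)$: one must see both that the dual integral is finite --- which is precisely where the hypothesis $p>2$ is used --- and that its value is independent of $R$, the exponent $p'/p-p'+1$ collapsing to $0$. Everything else (the application of Proposition \ref{posevabstract}, the membership of the eigenfunction in the weighted space, and the elementary dyadic H\"older estimates on $(0,\infty)$) is routine and parallels the proof of Theorem \ref{endpoint}.
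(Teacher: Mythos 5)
Your proposal is correct and follows essentially the same route as the paper: \eqref{eq:evbrv} via Proposition \ref{posevabstract} applied with $X=L^2(w^{-1})$, $w=\max\{|V|,\delta G\}$, and Theorem \ref{brv}, and \eqref{eq:evbrvp} by bounding $\|V\|_{MT}$ through a split of $\int_R^\infty$ at $r=2R$ with H\"older's inequality and the scale-invariant dual integral on $(R,2R)$ (finite precisely because $p'<2$). The only difference is cosmetic: the paper runs the tail estimate over the $R$-scaled dyadic intervals $(2^jR,2^{j+1}R]$ and matches to absolute dyadic annuli at the end, while you bound the tail by $\int_0^\infty v\,dr$ and apply the dyadic H\"older bound directly on the annuli $(2^j,2^{j+1})$, spelling out the ``at most two annuli plus subadditivity of $x\mapsto x^{1/p}$'' step that the paper leaves implicit.
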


Clearly, \eqref{eq:evbrvp} for $p=\infty$ means
$$
|E|^{1/2} \leq C_\nu \sum_{j\in\Z} \left( \sup_{2^j<|x|<2^{j+1}} |x| |V(x)| \right) \,.
$$
Since $\sum_{j\in\Z} \left( \sup_{2^j<|x|<2^{j+1}} |x| (1+|x|)^{-1-\epsilon} \right)<\infty$ for $\epsilon>0$, this bound implies, in particular,
$$
|E|^{1/2} \leq C_{\nu,\epsilon} \ \text{ess-sup}_{x\in\R^\nu} (1+|x|)^{1+\epsilon} |V(x)| \,,
\qquad \epsilon>0 \,.
$$
which is the main result of \cite{S}.

\begin{proof}
Bound \eqref{eq:evbrv} follows from Theorem \ref{brv} by Proposition \ref{posevabstract} using the arguments after Theorem \ref{brvcor}. Having proved this, for \eqref{eq:evbrvp} it suffices to prove that
\begin{equation}
\label{eq:evbrvproof}
\|V\|_{MT} \leq C_{p,\nu} \sum_{j\in\Z} \left( \int_{2^j}^{2^{j+1}} \|V(r\,\cdot)\|_{L^\infty(\Sph^{nu-1})}^p r^{p-1} \,dr \right)^{1/p} \,.
\end{equation}
This bound is stated in \cite{BRV} without proof, so we include it for the sake of completeness. We abbreviate $v(r):= \|V(r\,\cdot)\|_{L^\infty(\Sph^{\nu-1})}$. Since $p>2$,
\begin{align*}
\int_R^{2R1} \frac{v(r) r}{(r^2-R^2)^{1/2}} \,dr
& \leq \left( \int_R^{2R} v(r)^p r^{p-1} \,dr \right)^{1/p} \left( \int_R^{2R} \left(\frac{r}{\sqrt{r^2-R^2}} \right)^{p'} \frac{dr}{r} \right)^{1/p'} \\
& = c_p \left( \int_R^{2R} v(r)^p r^{p-1} \,dr\right)^{1/p} \,.
\end{align*}
On the other hand, for $r\geq 2R$, $r/\sqrt{r^2-R^2} \leq 2/\sqrt 3$, and therefore
\begin{align*}
\int_{2R}^\infty \frac{v(r) r}{(r^2-R^2)^{1/2}} \,dr & \leq \frac{2}{\sqrt 3} \sum_{j=1}^\infty \int_{2^j R}^{2^{j+1}R} v(r) \,dr \\
& \leq \frac{2}{\sqrt 3} \sum_{j=1}^\infty \left( \int_{2^j R}^{2^{j+1}R} v(r)^p r^{p-1} \,dr \right)^{1/p} \left( \int_{2^j R}^{2^{j+1}R}  \frac{dr}{r} \right)^{1/p'} \\
& = \frac{2}{\sqrt 3} (\ln2)^{1/p'} \sum_{j=1}^\infty \left( \int_{2^j R}^{2^{j+1}R} v(r)^p r^{p-1} \,dr \right)^{1/p} \,.
\end{align*}
Picking $k\in\Z$ such that $2^k\leq R<2^{k+1}$ we easily deduce \eqref{eq:evbrvproof}.
\end{proof}



\appendix

\section{Bounds on Bessel functions}

The key ingredient in our proof of Proposition \ref{resmixedradial} was the following result about integrals of Bessel and Hankel functions.

\begin{proposition}\label{bessel}
Let $\nu\geq 2$ and $2\nu/(\nu-1)< q< 2\nu/(\nu-2)$. Then
$$
\sup_{\mu\geq 0} \int_0^\infty \int_r^\infty |J_\mu(r)|^q |H^{(1)}_\mu(r')|^q (rr')^{-q(\nu-2)/2 + \nu-1} dr\,dr' <\infty \,.
$$
\end{proposition}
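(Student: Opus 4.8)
The plan is to split the double integral according to whether the arguments $r,r'$ are $\lesssim 1$ or $\gtrsim 1$ relative to the order $\mu$, and to estimate each piece using the standard uniform asymptotics of $J_\mu$ and $H^{(1)}_\mu$. Write $\mu' := \mu + 1$ (to avoid issues at $\mu = 0$). The key inputs are: (i) for $0 < r \le \mu'$ one has $|J_\mu(r)| \le C (r/\mu')^{\mu}$ up to a harmless power, and more precisely the uniform bound $|J_\mu(r)| \le C (r/\mu')^\mu \mu'^{-1/2}$-type estimates together with $|J_\mu(r)| \le C \mu'^{-1/3}$ near the turning point; (ii) for $r \ge \mu'$, $|J_\mu(r)| \le C r^{-1/2}$ and $|H^{(1)}_\mu(r')| \le C (r')^{-1/2}$ uniformly in $\mu$; (iii) for $0 < r' \le \mu'$, $|H^{(1)}_\mu(r')| \le C (r'/\mu')^{-\mu} \mu'^{-1/2}$, i.e. $H^{(1)}_\mu$ blows up like a negative power. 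Since $r \le r'$ throughout the region of integration, there are only three regimes: (a) $r \le r' \le \mu'$; (b) $r \le \mu' \le r'$; (c) $\mu' \le r \le r'$.

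In regime (c), $|J_\mu(r)|^q |H^{(1)}_\mu(r')|^q (rr')^{-q(\nu-2)/2 + \nu - 1} \le C (rr')^{-q/2} (rr')^{-q(\nu-2)/2+\nu-1} = C (rr')^{-q(\nu-1)/2 + \nu - 1}$. The inner integral $\int_r^\infty (r')^{-q(\nu-1)/2+\nu-1}\,dr'$ converges at infinity exactly when $q(\nu-1)/2 - \nu + 1 > 1$, i.e. $q > 2\nu/(\nu-1)$, and then equals a constant times $r^{-q(\nu-1)/2+\nu}$; the remaining $\int_{\mu'}^\infty r^{-q(\nu-1)+\nu}\,dr$ converges at infinity when $q(\nu-1) - \nu > 1$, which for $q > 2\nu/(\nu-1)$ and $\nu \ge 2$ holds, and produces a positive power of $\mu'$ that will be absorbed by gains from the other regimes — or, more cleanly, one bounds everything by its value with $\mu'$ replaced by any fixed constant and checks convergence at $0$ using the refined bounds (i),(iii) which give extra powers $(r/\mu')^{q\mu}$, $(r'/\mu')^{-q\mu}$. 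The genuinely delicate regime is (b), the "mixed" one, where $r$ is past its turning point but $r'$ is not: here one uses $|J_\mu(r)| \le C r^{-1/2}$, $|H^{(1)}_\mu(r')| \le C\, \mu'^{-1/2}(r'/\mu')^{-\mu}$ for $r' \lesssim \mu'/2$ and the turning-point bound $|H^{(1)}_\mu(r')| \le C\,\mu'^{-1/3}$ for $r' \sim \mu'$, integrate in $r'$ over $[\max(r,1), \mu']$, and then integrate the result in $r$ over $[1,\mu']$ (the portion with $r \le 1$ needing (i) again). The upper cutoff $q < 2\nu/(\nu-2)$ enters precisely to make the $r'$-integral near $0$ (equivalently the $r$-integral after using $J_\mu(r) \lesssim r^{-1/2}$ and the kernel power $-q(\nu-2)/2 + \nu - 1$) convergent: one needs $q(\nu-2)/2 - \nu + 1 < $ (appropriate threshold), and $q < 2\nu/(\nu-2)$ is what guarantees the local integrability of $(rr')^{-q(\nu-2)/2+\nu-1}$ against the $(rr')^{-q/2}$ from the oscillatory bounds.

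Concretely I would organize the proof as follows. First, collect the uniform Bessel/Hankel bounds in a single display (small-argument power bounds, turning-point bounds, large-argument $\lesssim (\text{argument})^{-1/2}$ bounds), citing a standard reference such as \cite{St} or the DLMF; these are exactly the estimates used by Barcelo--Ruiz--Vega. Second, since the bound is trivial for $\mu$ in any fixed compact set (the integrand is a continuous function with the stated decay, and convergence at $0$ and $\infty$ follows from $2\nu/(\nu-1) < q < 2\nu/(\nu-2)$ exactly as in the $\mu = 0$ Kenig--Ruiz--Sogge estimate), reduce to $\mu \ge 1$ and rescale by setting $r = \mu' s$, $r' = \mu' s'$; the measure $(rr')^{-q(\nu-2)/2+\nu-1}\,dr\,dr'$ scales by $\mu'^{-q(\nu-2)+2\nu}$, and $|J_\mu(\mu' s)|$, $|H^{(1)}_\mu(\mu' s')|$ are controlled by $\mu$-independent profile functions of $s, s'$ times explicit powers of $\mu'$. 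Third, check that all the $\mu'$-powers cancel — this is forced, since the original integral is scale-invariant in the sense that the substitution $r \to \lambda r$ combined with $\mu \to$ nothing does not apply, so one must verify the arithmetic — and that the resulting $s, s'$ integral over $\{0 < s \le s'\}$ converges; split it into $\{s' \le 1\}$, $\{s \le 1 \le s'\}$, $\{1 \le s\}$ and apply the profile bounds, with the exponent conditions on $q$ entering exactly at the endpoints $s, s' \to 0$ (upper bound on $q$) and $s, s' \to \infty$ (lower bound on $q$). The main obstacle, as flagged, is handling the mixed region $\{s \lesssim 1 \lesssim s'\}$ where $J_\mu$ has switched to oscillatory decay while $H^{(1)}_\mu$ is still exponentially large in $\mu$: one must use that the exponential growth of $H^{(1)}_\mu(\mu' s')$ as $s'$ decreases below $1$ is beaten by the gain $(s')^{-q(\nu-2)/2}$ from the kernel only after integrating, and the turning-point layer $s' \approx 1$ must be treated with the $\mu'^{-1/3}$ bound rather than either extreme; I expect this to be the place where the argument of \cite{BRV} must be followed most closely.
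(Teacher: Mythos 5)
There is a genuine gap, and it sits exactly where the real difficulty of the proposition lies: the region $r\le r'\le \mu$, where $H^{(1)}_\mu(r')$ is exponentially large in $\mu$. Your bound (i), $|J_\mu(r)|\le C(r/\mu')^{\mu}\mu'^{-1/2}$ for all $0<r\le\mu'$, is false: by the uniform WKB (Debye) asymptotics, for $r=\mu x$ with $0<x<1$ one has $|J_\mu(\mu x)|\asymp \mu^{-1/2}e^{-\mu\eta(x)}$ with $\eta(x)=\log\bigl((1+\sqrt{1-x^2})/x\bigr)-\sqrt{1-x^2}<\log(1/x)$, so $J_\mu(\mu x)$ is exponentially \emph{larger} than $x^{\mu}\mu^{-1/2}$ (at $x=1/2$ the true decay rate is $e^{-0.451\mu}$ versus your claimed $e^{-0.693\mu}$). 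Hence the cancellation you invoke in regime (a), namely $(r/\mu')^{q\mu}(r'/\mu')^{-q\mu}=(r/r')^{q\mu}\le 1$, is not justified by the bounds you state. The cancellation is real, but it has to be extracted either from the genuine exponential bounds $|J_\mu(r)|\lesssim \mu^{-1/2}e^{-\mu\phi_\mu(r)}$, $|H^{(1)}_\mu(r')|\lesssim \mu^{-1/2}e^{+\mu\phi_\mu(r')}$ together with the monotonicity of $\phi_\mu$, or --- as the paper does --- from the fact that $r|H^{(1)}_\mu(r)|^2$ is decreasing in $r$ (Watson), which allows one to pull the Hankel factor from $r'$ back to the smaller argument $r$ and then use the same-argument product bounds $|J_\mu(r)H^{(1)}_\mu(r)|\le C\mu^{-1}$ (respectively $C\mu^{-1/2}(\mu-r)^{-1/2}$ for $\mu\sech\alpha_0<r\le\mu-\mu^{1/3}$). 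This mechanism is absent from your proposal. Relatedly, you misidentify the delicate regime: in your regime (b), $r\le\mu'\le r'$, the factor $J_\mu(r)$ is small and $H^{(1)}_\mu(r')$ is merely oscillatory, so it is easy; the configuration you actually describe there (``$r$ past its turning point but $r'$ not'') cannot occur, since $r\le r'$.

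A second, smaller but still real, problem is your bound (ii): $|J_\mu(r)|,|H^{(1)}_\mu(r)|\le Cr^{-1/2}$ uniformly in $\mu$ for $r\ge\mu'$ fails near the turning point, where the correct size is $r^{-1/4}(r-\mu)^{-1/4}$, of order $\mu^{-1/3}$ at $r\approx\mu+\mu^{1/3}$, not $\mu^{-1/2}$. This is not cosmetic: with $\rho=-q(\nu-2)/2+\nu-1$ the turning-point window of width $\mu^{1/3}$ and amplitude $\mu^{-1/3}$ is what forces the condition $q/3\ge\rho+1/3$ in the paper's lemmas, and for $\nu=2$ that condition reads $q\ge 4$, i.e.\ it is exactly saturated at the lower endpoint $q>2\nu/(\nu-1)$ of the hypothesis; so this region must be estimated with the Airy-regime bounds, and bookkeeping based on $r^{-1/2}$ is unsupported even though the final exponents happen to come out right under the stated hypotheses. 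With the correct five-regime bounds (the Barcelo--Ruiz--Vega estimates reproduced in the paper's appendix), your overall architecture --- split $\{r\le r'\}$ at the turning point, bound the part where both variables exceed $\mu-\mu^{1/3}$ by a product of two one-dimensional integrals, and handle the part where both lie below it by a transferred product bound --- is essentially the paper's proof; but as written, (i), (ii) and the treatment of regime (a) do not stand.
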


We emphasize that in this result $\nu$ is not required to be integer and $\mu$ is not required to be a half-integer (although they will be in our application later on).

In this appendix we prove Proposition \ref{bessel} using the techniques of \cite{BRV}. Using WKB analysis, Barcelo, Ruiz and Vega prove the following uniform bounds on Bessel functions. We state their complete result although we will not use its full strength.

\begin{proposition}\label{besselbounds}
There is a constant $C>0$ and a constant $\alpha_0\in(0,1/2)$ such that the following holds for all $\mu\geq 1/2$.
\begin{enumerate}
\item For $0<r\leq 1$,
$$
|J_\mu(r)|\leq C \frac{(r/2)^{\mu}}{\Gamma(\mu+1)} \,,
\quad
|H^{(1)}_\mu(r)|\leq C \frac{\Gamma(\mu)}{(r/2)^{\mu}} \,.
$$
\item For $1\leq r\leq \mu\sech\alpha_0$,
$$
|J_\mu(r)|\leq C \frac{e^{-\mu\phi_\mu(r)}}{\mu^{1/2}} \,,
\quad
|H^{(1)}_\mu(r)|\leq C \frac{e^{\mu\phi_\mu(r)}}{\mu^{1/2}} \,.
$$
\item For $\mu\sech\alpha_0 \leq r\leq \mu-\mu^{1/3}$,
$$
|J_\mu(r)|\leq C \frac{e^{-\mu\phi_\mu(r)}}{\mu^{1/4}(\mu-r)^{1/4}} \,,
\quad
|H^{(1)}_\mu(r)|\leq C \frac{e^{\mu\phi_\mu(r)}}{\mu^{1/4}(\mu-r)^{1/4}} \,.
$$
\item For $\mu-\mu^{1/3}\leq r\leq \mu+\mu^{1/3}$,
$$
|J_\mu(r)|\leq C \frac{1}{\mu^{1/3}} \,,
\quad
|H^{(1)}_\mu(r)|\leq C \frac{1}{\mu^{1/3}} \,.
$$
\item For $r\geq \mu+\mu^{1/3}$,
$$
|J_\mu(r)|\leq C \frac{1}{r^{1/4}(r-\mu)^{1/4}} \,,
\quad
|H^{(1)}_\mu(r)|\leq C \frac{1}{r^{1/4}(r-\mu)^{1/4}} \,.
$$
\end{enumerate}
Here, the function $\phi_\mu$ is defined by $\phi_\mu(\mu\sech\alpha)=\alpha-\tanh\alpha$.
\end{proposition}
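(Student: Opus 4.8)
The plan is to prove Proposition \ref{besselbounds} by dividing the positive real axis into the five regions listed and running a WKB/Liouville--Green analysis of Bessel's equation
\[
u'' + \left( 1 - \frac{\mu^2 - 1/4}{r^2} \right) u = 0 \,,
\qquad u(r) = r^{1/2} J_\mu(r) \ \text{ or } \ r^{1/2} H^{(1)}_\mu(r) \,,
\]
the point being that $q(r) := 1 - (\mu^2-1/4)/r^2$ changes sign near the turning point $r = \mu$, so different approximations are valid on either side. First I would record the small-argument behavior: for $0 < r \leq 1$ the standard series/integral representations of $J_\mu$ and $H^{(1)}_\mu = J_\mu + i Y_\mu$ give (1) directly, with the constant $C$ controlled uniformly in $\mu \geq 1/2$ by monotonicity of $\Gamma$. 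For the classically forbidden region $r < \mu$ (cases (2) and (3)), I would introduce the Liouville variable $\phi_\mu$ implicitly via $r = \mu \sech\alpha$ so that $\phi_\mu(\mu\sech\alpha) = \alpha - \tanh\alpha$; then $(\phi_\mu')^2 = (\mu^2 - r^2)/(\mu^2 r^2)$ up to the $1/4$ correction, and the recessive solution decays like $e^{-\mu\phi_\mu}$ while the dominant one grows like $e^{+\mu\phi_\mu}$. The prefactor is $q(r)^{-1/4} \sim (\mu^2-r^2)^{-1/4} \mu^{1/2} r^{1/2}$, which, after simplification, gives $\mu^{-1/2}$ uniformly when $r \leq \mu\sech\alpha_0$ (region (2), where $\alpha \geq \alpha_0 > 0$ keeps $q$ bounded away from $0$) and $\mu^{-1/4}(\mu-r)^{-1/4}$ when $\mu\sech\alpha_0 \leq r \leq \mu - \mu^{1/3}$ (region (3), approaching the turning point). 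The choice of $\alpha_0 \in (0,1/2)$ is dictated by the requirement that the relative error in the Liouville--Green approximation, an integral of $|q^{-1/4}(q^{-1/4})''|$, stays below a fixed constant; one checks this error integral converges and is $O(\mu^{-1})$ on region (2) and $O(1)$ but small on region (3).

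Next I would handle the turning-point region (4), $|r - \mu| \leq \mu^{1/3}$, where the Liouville--Green expansion breaks down and one must use the Airy-function approximation: rescaling $r = \mu + \mu^{1/3} t$ linearizes $q$, and Bessel's equation becomes, to leading order, Airy's equation in $t$, so $J_\mu(\mu + \mu^{1/3}t) \approx 2^{1/3}\mu^{-1/3}\mathrm{Ai}(-2^{1/3}t)$ (the classical Langer approximation), and similarly for $H^{(1)}_\mu$; since $\mathrm{Ai}$ and $\mathrm{Bi}$ are bounded on compact $t$-intervals, both functions are $O(\mu^{-1/3})$ there, uniformly in $\mu$. Finally, in the oscillatory region (5), $r \geq \mu + \mu^{1/3}$, I would again use Liouville--Green but now with $q > 0$: the solutions oscillate with amplitude $q(r)^{-1/4} \sim (r^2 - \mu^2)^{-1/4} r^{1/2}$ up to the $1/4$ shift, and one checks $(r^2-\mu^2)^{-1/4} r^{1/2} \leq C r^{1/4}(r-\mu)^{-1/4}$ while the approximation error integral $\int_r^\infty |q^{-1/4}(q^{-1/4})''|$ is finite uniformly in $\mu$ (this is where the gap $\mu^{1/3}$ from the turning point is used, to keep the error bounded). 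Matching the recessive/dominant branches across the boundaries $r = \mu\sech\alpha_0$ and $r = \mu \pm \mu^{1/3}$ using the known connection formulas for $J_\mu, H^{(1)}_\mu$ pins down the correct linear combinations and hence identifies $J_\mu$ as the (globally recessive-in-the-forbidden-region) solution and $H^{(1)}_\mu$ as the outgoing one.

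The main obstacle I expect is controlling the Liouville--Green error term \emph{uniformly in $\mu$}, especially as one lets $\mu \to \infty$: the error is a nonlinear-remainder integral of the form $\int |q^{-1/4} \frac{d^2}{dr^2} q^{-1/4}| \, dr$ over each region, and near the endpoints $r = \mu \sech\alpha_0$ and $r = \mu - \mu^{1/3}$ it is delicate because $q$ is getting small; one must track the exact powers of $\mu$ and of $(\mu - r)$ to see that the errors stay below a $\mu$-independent constant (and in fact vanish on region (2)). The cleanest route is to quote the detailed WKB estimates of Barcelo--Ruiz--Vega \cite{BRV}, who carried out exactly this analysis; alternatively one can invoke Olver's uniform asymptotic expansions of Bessel functions of large order in terms of Airy functions (valid on all of $(0,\infty)$ with explicit error bounds) and then translate those into the region-by-region estimates claimed here. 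In the write-up I would follow \cite{BRV}, citing their Lemmas for the delicate error control and only verifying that their formulation yields precisely the five bounds (1)--(5) with the stated $\mu$-dependence.
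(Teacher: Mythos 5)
The paper offers no proof of this proposition at all: it is quoted verbatim as a result of Barcelo--Ruiz--Vega \cite{BRV} (``Using WKB analysis, Barcelo, Ruiz and Vega prove the following uniform bounds\dots''), which is precisely the citation your write-up ultimately falls back on for the uniform-in-$\mu$ error control. Your Liouville--Green/Langer outline is a correct sketch of how those bounds are derived (and matches Olver's uniform Airy-type asymptotics), so your approach is essentially the same as the paper's.
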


We split the proof of Proposition \ref{bessel} into two parts. The first part (which is analogous to Lemma 6 in \cite{BRV}) is

\begin{lemma}\label{besselint}
Let $q>0$ and $\rho>-1$ such that
$$
\frac{q}{2}>\rho+1 \,,
\quad
\frac{q}{3}\geq \rho +\frac{1}{3} \,.
$$
Then
$$
\sup_{\mu\geq 1/2} \left( \int_0^\infty |J_\mu(r)|^q r^\rho \,dr + \int_{\mu-\mu^{1/3}}^\infty |H^{(1)}_\mu(r)|^q r^\rho \,dr \right) <\infty \,. 
$$
\end{lemma}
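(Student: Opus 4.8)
The plan is to substitute the pointwise estimates of Proposition~\ref{besselbounds} into each of the five $r$-regimes and check, regime by regime, that the resulting contribution to $\int_0^\infty |J_\mu(r)|^q r^\rho\,dr$ stays bounded as $\mu\to\infty$ precisely under the two hypotheses $q/2>\rho+1$ and $q/3\ge\rho+1/3$; the Hankel integral will then be handled as a sub-case. In the regime $0<r\le 1$ the bound gives $|J_\mu(r)|^q r^\rho\le C\,\Gamma(\mu+1)^{-q}(r/2)^{q\mu}r^\rho$, which integrates against $dr$ (using $\rho>-1$) to a quantity that in fact tends to $0$ as $\mu\to\infty$. In the regime $1\le r\le\mu\sech\alpha_0$ the key point is that $\phi_\mu$ is decreasing in $r$, so $\phi_\mu(r)\ge\phi_\mu(\mu\sech\alpha_0)=\alpha_0-\tanh\alpha_0>0$ there; hence $|J_\mu(r)|^q\le C\mu^{-q/2}e^{-q\mu(\alpha_0-\tanh\alpha_0)}$, and the exponential factor beats the power of $\mu$ produced by $\int_1^{\mu\sech\alpha_0}r^\rho\,dr$, so this contribution is again $o(1)$.

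The substantive regimes are the ones near the turning point $r=\mu$ and beyond it. For $\mu\sech\alpha_0\le r\le \mu-\mu^{1/3}$ I would change variables by $r=\mu\sech\alpha$ and use, for small $\alpha$, the expansions $1-\sech\alpha\sim\alpha^2/2$ and $\alpha-\tanh\alpha\sim\alpha^3/3$, together with $r\asymp\mu$; writing $\alpha=\mu^{-1/3}t$ turns $\mu\phi_\mu(r)$ into $\asymp t^3$ and collapses the whole contribution to $C\mu^{\rho+1/3-q/3}\int_{c}^{\infty}e^{-qt^3/3}t^{1-q/2}\,dt$, whose $t$-integral converges (the lower limit $c\asymp\sqrt2$ stays away from $0$). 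This is bounded exactly when $q/3\ge\rho+1/3$. In the turning-point layer $\mu-\mu^{1/3}\le r\le\mu+\mu^{1/3}$ one has $|J_\mu|\le C\mu^{-1/3}$, $r\asymp\mu$ and an interval of length $2\mu^{1/3}$, which again gives $C\mu^{\rho+1/3-q/3}$. Finally, for $r\ge\mu+\mu^{1/3}$ I would split at $r=2\mu$: on $[2\mu,\infty)$ one has $r-\mu\asymp r$, the integrand is $\asymp r^{\rho-q/2}$, and the tail is at most $C\mu^{\rho+1-q/2}$ --- finite and bounded exactly because $q/2>\rho+1$; on $[\mu+\mu^{1/3},2\mu]$ one has $r\asymp\mu$, leaving $C\mu^{\rho-q/4}\int_{\mu^{1/3}}^{\mu}s^{-q/4}\,ds$, which (distinguishing $q<4$, $q=4$, $q>4$) is at most $C\mu^{\rho+1-q/2}+C\mu^{\rho+1/3-q/3}$ up to a logarithmic factor, hence controlled by the two hypotheses.

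For the Hankel integral $\int_{\mu-\mu^{1/3}}^\infty|H_\mu^{(1)}(r)|^q r^\rho\,dr$ the point is that on the two regimes $\mu-\mu^{1/3}\le r\le\mu+\mu^{1/3}$ and $r\ge\mu+\mu^{1/3}$ Proposition~\ref{besselbounds} assigns $|H_\mu^{(1)}(r)|$ exactly the same pointwise bound as $|J_\mu(r)|$, so this integral is estimated verbatim like the turning-point layer and the region $r\ge\mu+\mu^{1/3}$ above, i.e.\ it is at most $C\mu^{\rho+1/3-q/3}+C\mu^{\rho+1-q/2}$ up to a logarithm. Summing the regimes proves the lemma (the few regimes that are empty or degenerate for $\mu$ near $1/2$ only need obvious minor adjustments).

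I expect the difficulty to be bookkeeping rather than depth: the delicate points are getting the power of $\mu$ in $\mu\phi_\mu(r)$ right near the turning point and carrying out the two changes of variable in the transition and oscillatory regimes correctly, and then verifying that the exponents $\rho+1/3-q/3$ and $\rho+1-q/2$ are exactly made non-positive by the hypotheses --- with equality allowed only in $q/3\ge\rho+1/3$, since the strict inequality $q/2>\rho+1$ is already forced by convergence of the tail $\int^\infty r^{\rho-q/2}\,dr$.
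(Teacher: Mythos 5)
Your proof is correct and takes essentially the same approach as the paper: plug the pointwise bounds of Proposition~\ref{besselbounds} into the integral regime by regime (with the same splitting at $r=2\mu$ for the tail) and reduce the Hankel integral to the $J_\mu$ case because the bounds coincide for $r\ge\mu-\mu^{1/3}$. The only, harmless, difference is in the region $\mu\sech\alpha_0\le r\le\mu-\mu^{1/3}$, where you substitute $r=\mu\sech\alpha$, $\alpha=\mu^{-1/3}t$, while the paper sums over dyadic shells $\mu-2^{j+1}\mu^{1/3}<r\le\mu-2^j\mu^{1/3}$ using $\phi_\mu(\mu-2^j\mu^{1/3})\gtrsim\mu^{-1}2^{3j/2}$; both give the same bound $C\mu^{\rho+1/3-q/3}$.
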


Arguing slightly more carefully, we can replace the lower bound $\rho>-1$ by $\frac{q}{2} + \rho+1>0$. More generally, it can be improved to $\mu_0 q+ \rho+1>0$ if we restrict the supremum to $\mu\geq\mu_0\geq 1/2$. This is only needed to ensure the integrability of $|J_\mu(r)|^q r^\rho$ near $r=0$.

\begin{proof}[Proof of Lemma \ref{besselint}]
We are going to use the upper bounds from Proposition \ref{besselbounds}. Since they coincide for $J_\mu$ and $H^{(1)}_\mu$ in the range $r\geq \mu-\mu^{1/3}$, we only prove the lemma for $J_\mu$. We write $\int_0^\infty |J_\mu(r)|^q r^\rho \,dr = I_1+I_2+I_3+I_4+I_5+I_6$, where the different terms correspond to the following regions of integration:
\begin{align*}
I_1: \quad & 0<r\leq 1 \,,\\
I_2: \quad & 1<r\leq \mu\sech\alpha_0 \,,\\
I_3: \quad & \mu\sech\alpha_0< r\leq \mu-\mu^{1/3} \,,\\
I_4: \quad & \mu-\mu^{1/3}<r\leq \mu+\mu^{1/3} \,,\\
I_5: \quad & \mu+\mu^{1/3}<r\leq 2\mu \,,\\
I_6: \quad & r>2\mu \,.
\end{align*}
In each of the regions we use the bounds from Proposition \ref{besselbounds} and we only make a few remarks about the straightforward computations. The finiteness of $I_1$ requires $q\mu+\rho+1>0$, which follows from $\rho>-1$. To bound $I_2$ we use the fact that $|J_\mu(r)|\leq C\mu^{-1}$ for $0< r\leq \mu\sech\alpha_0$, which is an easy consequence of Proposition \ref{besselbounds}. To bound $I_3$ we split the region of integration into intervals $(\mu-2^{j+1}\mu^{1/3},\mu-2^j\mu^{1/3}]$ and use $\phi_\mu(r) \geq \phi_\mu(\mu- 2^j\mu^{1/3}) \geq C^{-1} \mu^{-1} 2^{3j/2}$ in each such interval. This yields $I_3 \leq C \mu^{-q/3+\rho+1/3}$, which is uniformly bounded in $\mu$ by assumption. We obtain the same bound on $I_4$ and, if $q>4$, on $I_5$. Finally, if $q/2-\rho-1>0$ then $I_6$ is finite and satisfies $I_6\leq C\mu^{-q/2+\rho+1}$. The same bound holds for $I_5$ if $q<4$ and, with a factor of $\ln\mu$, if $q=4$. This concludes the sketch of the proof.
\end{proof}

The second part in the proof of Proposition \ref{bessel} (which is analogous to equation (2.28) in \cite{BRV}) is

\begin{lemma}\label{besselintdouble}
Let $q>0$ and $\rho>-1$ such that
$$
\frac{q}{2}>\rho+1 \,,
\quad
\frac{q}{3}\geq \rho +\frac{1}{3} \,.
$$
Then
$$
\sup_{\mu\geq 1/2} \int_0^{\mu-\mu^{1/3}} \int_r^{\mu-\mu^{1/3}} |J_\mu(r)|^q |H^{(1)}_\mu(r')|^q (rr')^\rho \,dr'\,dr <\infty \,. 
$$
\end{lemma}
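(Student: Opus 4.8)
The plan is to exploit the fact that, below the turning point $r=\mu$, the Bessel function $J_\mu$ decays exponentially at exactly the rate at which the Hankel function $H_\mu^{(1)}$ grows, so that on the region $\{0<r\le r'\}$ the two exponential factors from Proposition \ref{besselbounds} cancel. First I would record the relevant property of the phase: from $\phi_\mu(\mu\sech\alpha)=\alpha-\tanh\alpha$ and the fact that $\alpha\mapsto\mu\sech\alpha$ is decreasing on $(0,\infty)$ while $\alpha\mapsto\alpha-\tanh\alpha$ is non-negative with non-negative derivative $\tanh^2\alpha$, it follows that $\phi_\mu$ is non-negative and non-increasing on $(0,\mu)$. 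Hence for $0<r\le r'<\mu$,
$$
e^{-\mu\phi_\mu(r)}\,e^{\mu\phi_\mu(r')}=e^{-\mu(\phi_\mu(r)-\phi_\mu(r'))}\le 1 \,.
$$

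Next I would package Proposition \ref{besselbounds} into a single bound on $(0,\mu-\mu^{1/3}]$. Set $a_\mu(s):=\mu^{-1/2}$ for $0<s\le\mu\sech\alpha_0$ and $a_\mu(s):=\mu^{-1/4}(\mu-s)^{-1/4}$ for $\mu\sech\alpha_0<s\le\mu-\mu^{1/3}$. Then $|J_\mu(r)|\le C\,a_\mu(r)\,e^{-\mu\phi_\mu(r)}$ and $|H_\mu^{(1)}(r)|\le C\,a_\mu(r)\,e^{\mu\phi_\mu(r)}$ for $0<r\le\mu-\mu^{1/3}$: for $1\le r$ this is parts (2)--(3) of Proposition \ref{besselbounds}, and for $0<r<1$ it follows from part (1) by using Stirling's formula to rewrite $(r/2)^\mu/\Gamma(\mu+1)$ (resp. $\Gamma(\mu)/(r/2)^\mu$) as $e^{-\mu\phi_\mu(r)}$ (resp. $e^{\mu\phi_\mu(r)}$) times a quantity comparable to $\mu^{-1/2}$. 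Combining with the previous display, on $\{0<r\le r'\le\mu-\mu^{1/3}\}$ we obtain the pointwise bound
$$
|J_\mu(r)|^q\,|H_\mu^{(1)}(r')|^q\,(rr')^\rho\le C^{2q}\,a_\mu(r)^q\,a_\mu(r')^q\,(rr')^\rho \,.
$$
Integrating this over that region and then enlarging the domain to the full square $\{0<r\le\mu-\mu^{1/3},\ 0<r'\le\mu-\mu^{1/3}\}$ — which is now harmless since the right-hand side factorizes into a function of $r$ times a function of $r'$ — the double integral is bounded by $C^{2q}B_\mu^2$ with $B_\mu:=\int_0^{\mu-\mu^{1/3}}a_\mu(s)^q s^\rho\,ds$. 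It is crucial that the constraint $r\le r'$ was used to kill the exponentials \emph{before} enlarging, since $\int_0^{\mu-\mu^{1/3}}|H_\mu^{(1)}(s)|^q s^\rho\,ds$ is exponentially large on its own. (Note also that for $\mu\le1$ the region of integration is empty, so one may assume $\mu>1$ and $\phi_\mu$ is then defined at $r=1$.)

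It then remains to verify $\sup_{\mu\ge1/2}B_\mu<\infty$, an elementary one-variable estimate of the same type carried out for $I_1,I_2,I_3$ in the proof of Lemma \ref{besselint}. On $(0,\mu\sech\alpha_0]$ the contribution is $\lesssim\mu^{-q/2}\int_0^{\mu\sech\alpha_0}s^\rho\,ds\lesssim\mu^{-q/2+\rho+1}$ (integrability near $0$ uses $\rho>-1$), bounded since $q/2>\rho+1$; on $(\mu\sech\alpha_0,\mu-\mu^{1/3}]$, substituting $t=\mu-s$ and using $s\asymp\mu$, the contribution is $\lesssim\mu^{-q/4+\rho}\int_{\mu^{1/3}}^{c\mu}t^{-q/4}\,dt$, which is $\lesssim\mu^{-q/3+\rho+1/3}$ if $q>4$ and $\lesssim\mu^{-q/2+\rho+1}$ (with an extra $\log\mu$ when $q=4$) if $q\le4$ — bounded in all cases under $q/2>\rho+1$ and $q/3\ge\rho+1/3$. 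As noted after Lemma \ref{besselint}, the hypothesis $\rho>-1$ can be relaxed to $\mu_0 q+\rho+1>0$ if the supremum is restricted to $\mu\ge\mu_0$.

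The one genuinely load-bearing step is the cancellation of the exponentials via the monotonicity of $\phi_\mu$ on $\{r\le r'\}$; without it the Hankel growth is uncontrollable. The only mildly technical point is fitting the corner $0<r<1$, where Proposition \ref{besselbounds} is phrased in terms of $\Gamma$-functions, into the uniform template — handled by the Stirling comparison above, or alternatively by treating the three sub-regions $r'\le1$, $r\le1<r'$, and $1<r\le r'$ separately (the first via $|J_\mu(r)||H_\mu^{(1)}(r')|\le C\mu^{-1}(r/r')^\mu\le C\mu^{-1}$).
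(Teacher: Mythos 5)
Your proof is correct, but it takes a genuinely different route from the paper. The paper does not cancel the WKB exponentials pointwise; instead it invokes Watson's fact that $r|H^{(1)}_\mu(r)|^2$ is decreasing, which gives $|H^{(1)}_\mu(r')|^q\le (r/r')^{q/2}|H^{(1)}_\mu(r)|^q$ for $r'\ge r$, allowing the inner $r'$-integral to be computed (or bounded by $r^\rho(\mu-r)$ in the second region) with $|H^{(1)}_\mu(r)|^q$ pulled out at $r'=r$; the remaining single integral is then controlled via the product bound $|J_\mu(r)||H^{(1)}_\mu(r)|\le C^2\mu^{-1}$ (resp.\ $C^2\mu^{-1/2}(\mu-r)^{-1/2}$), which is exactly where the exponentials cancel in their version. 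Your approach cancels $e^{-\mu\phi_\mu(r)}e^{\mu\phi_\mu(r')}\le 1$ directly from the monotonicity of $\phi_\mu$, after which the integrand factorizes into $a_\mu(r)^q r^\rho\cdot a_\mu(r')^q (r')^\rho$ and the two-variable estimate collapses to a one-variable one. Your route is more self-contained (it never leaves Proposition \ref{besselbounds}, apart from the Stirling matching at the corner $r<1$, which is exactly where the paper's use of the product bound $|J_\mu||H^{(1)}_\mu|\le C^2\mu^{-1}$ quietly does the same work), while the paper's route via $r|H^{(1)}_\mu(r)|^2$ is slightly slicker and avoids having to uniformize the $r\le1$ regime against the WKB form. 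Both give the same power counting at the end. Your warning that the constraint $r\le r'$ must be used to kill the exponentials before enlarging the domain is the right observation and is exactly the reason the paper pulls $|H^{(1)}_\mu|$ out at $r'=r$ rather than estimating the $r'$-integral naively.
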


\begin{proof}[Proof of Lemma \ref{besselintdouble}]
We decompose the double integral as $I_1+I_2$, corresponding to the following regions of integration:
\begin{align*}
I_1: \quad & 0<r\leq \mu\sech\alpha_0 \,,\ r<r'\leq\mu-\mu^{1/3} \,,\\
I_2: \quad & \mu\sech\alpha_0< r\leq \mu-\mu^{1/3} \,, \ r<r'\leq\mu-\mu^{1/3} \,.
\end{align*}
To bound $I_1$ we use the fact that $r|H^{(1)}_\mu(r)|^2$ is a decreasing function of $r$ \cite[p. 446]{W} and obtain for $q/2>\rho+1$,
\begin{align*}
\int_r^{\mu-\mu^{1/3}} |H^{(1)}_\mu(r')|^q (r')^\rho \,dr' \leq r^{q/2} |H^{(1)}_\mu(r)|^q \int_r^\infty (r')^{\rho-q/2} \,dr' = \frac{r^{\rho+1}}{q/2-\rho-1} |H^{(1)}_\mu(r)|^q \,.
\end{align*}
The bounds from Proposition \ref{besselbounds} show that $|J_\mu(r)||H_\mu^{(1)}(r)| \leq C^2\mu^{-1}$ for $0<r\leq\mu\sech\alpha_0$, and therefore
$$
I_1 \leq \frac{C^{2q}\mu^{-q}}{q/2-\rho-1} \int_0^{\mu-\mu^{1/3}}  r^{2\rho+1} \,dr
\leq C' \mu^{-q+2\rho+2} \,.
$$
This is uniformly bounded since $q/2>\rho+1$.

To bound $I_2$ we argue similarly, but we estimate slightly differently
\begin{align*}
\int_r^{\mu-\mu^{1/3}} |H^{(1)}_\mu(r')|^q (r')^\rho \,dr' \leq r^{q/2} |H^{(1)}_\mu(r)|^q \int_r^\mu (r')^{\rho-q/2} \,dr' \leq r^{\rho}(\mu-r) |H^{(1)}_\mu(r)|^q \,.
\end{align*}
Proposition \ref{besselbounds} yields $|J_\mu(r)||H_\mu^{(1)}(r)| \leq C^2\mu^{-1/2}(\mu-r)^{-1/2}$ for $\mu\sech\alpha_0<r\leq\mu-\mu^{1/3}$, and therefore
$$
I_2 \leq C^{2q}\mu^{-q/2} \int_{\mu\sech\alpha_0}^{\mu-\mu^{1/3}}  (\mu-r)^{1-q/2} r^{2\rho} \,dr 
\leq C_q \mu^{2\rho-q/2} \int_{\mu\sech\alpha_0}^{\mu-\mu^{1/3}}  (\mu-r)^{1-q/2} \,dr \,.
$$
We conclude that
$$
I_2 \leq C_q' \times
\begin{cases}
\mu^{2\rho-2q/3+2/3} & \text{if}\ q>4 \,,\\
\mu^{2\rho-2} \ln\mu & \text{if}\ q=4 \,,\\
\mu^{2\rho-q+2} & \text{if}\ q<4 \,.
\end{cases}
$$
Under our assumptions on $q$ and $\rho$, this is uniformly bounded, as claimed.
\end{proof}

Finally, we give the

\begin{proof}[Proof of Proposition \ref{bessel}]
Let $\rho=-q(\nu-2)/2 + \nu-1$. The conditions $q<2\nu/(\nu-2)$ and $q>2\nu/(\nu-1)$ imply $\rho>-1$ and $q/2>\rho+1$, respectively. Finally, the condition $q/3\geq \rho+1/3$ follows from $q>2\nu/(\nu-1)$ and $\nu\geq 2$. Therefore we can apply Lemmas~\ref{besselint} and \ref{besselintdouble} and find that
\begin{align*}
& \int_0^\infty \int_r^\infty |J_\mu(r)|^q |H^{(1)}_\mu(r)|^q (rr')^{-q(\nu-2)/2 + \nu-1} dr\,dr' \\
& \qquad = \int_0^{\mu-\mu^{1/3}} \int_r^{\mu-\mu^{1/3}} |J_\mu(r)|^q |H^{(1)}_\mu(r)|^q (rr')^{-q(\nu-2)/2 + \nu-1} dr\,dr' \\
& \qquad \quad + \int_0^\infty \int_{\max\{r,\mu-\mu^{1/3}\}}^\infty |J_\mu(r)|^q |H^{(1)}_\mu(r)|^q (rr')^{-q(\nu-2)/2 + \nu-1} dr\,dr'
\end{align*}
is uniformly bounded in $\mu\geq 1/2$. The fact that the integrals are uniformly bounded for $0\leq\mu\leq1/2$ follows immediately from standard results about Bessel functions. This concludes the proof of the proposition.
\end{proof}



\bibliographystyle{amsalpha}

\begin{thebibliography}{22}

\bibitem{AAD} A. A. Abramov, A. Aslanyan, E. B. Davies, \textit{Bounds on complex eigenvalues and resonances}. J. Phys. A \textbf{34} (2001), 57--72.

\bibitem{BRV} J. A. Barcelo, A. Ruiz, L. Vega, \textit{Weighted estimates for the Helmholtz equation and some applications}. J. Funct. Anal. \textbf{150} (1997), 356--382.

\bibitem{BGK} A. Borichev, L. Golinskii, S. Kupin, \textit{A Blaschke-type condition and its application to complex Jacobi matrices}. Bull. London Math. Soc. \textbf{41} (2009), 117--123.

\bibitem{CFL} E. A. Carlen, R. L. Frank, E. H. Lieb, \textit{Stability estimates for the lowest eigenvalue of a Schr\"odinger operator}. Geom. Funct. Anal. \textbf{24} (2014), no. 1, 63--84.

\bibitem{DN} E. B. Davies, J. Nath, \textit{Schr\"odinger operators with slowly decaying potentials}. J. Comput. Appl. Math. \textbf{148} (2002), 1--28.

\bibitem{DHK} M. Demuth, M. Hansmann, G. Katriel, \textit{Eigenvalues of non-selfadjoint operators: A comparison of two approaches}, in: Mathematical Physics, Spectral Theory and Stochastic Analysis, Springer, 2013, 107--163.

\bibitem{EgGo1} I. Egorova, L. Golinski{\u\i}, \textit{On the location of the discrete spectrum for complex Jacobi matrices}. Proc. Amer. Math. Soc. \textbf{133} (2005), no. 12, 3635--3641.

\bibitem{F} R. L. Frank, \textit{Eigenvalue bounds for Schr\"odinger operators with complex potentials}. Bull. Lond. Math. Soc. \textbf{43} (2011), no. 4, 745--750.

\bibitem{FLLS} R. L. Frank, A. Laptev, E. H. Lieb, R. Seiringer, \textit{Lieb--Thirring inequalities for Schr\"odinger operators with complex-valued potentials}. Lett. Math. Phys. \textbf{77} (2006), 309--316.

\bibitem{FP} R. L. Frank, A. Pushnitski, \textit{Trace class conditions for functions of Schr\"odinger operators}. Comm. Math. Phys. \textbf{335} (2015), no. 1, 477--496. 

\bibitem{FS} R. L. Frank, J. Sabin, \textit{Restriction theorems for orthonormal functions, Strichartz inequalities and uniform Sobolev estimates}. Preprint (2014): http://arxiv.org/pdf/1404.2817.pdf

\bibitem{EgGo2} L. B. Golinski{\u\i}, I. E. Egorova, \textit{On limit sets for the discrete spectrum of complex Jacobi matrices}. (Russian) Mat. Sb. \textbf{196} (2005), no. 6, 43--70; translation in Sb. Math. \textbf{196} (2005), no. 5-6, 817--844.

\bibitem{G} K. Guo, \textit{A uniform $L^p$ estimate of Bessel functions and distributions supported on $S^{n-1}$}. Proc. Amer. Math. Soc. \textbf{125} (1997), no. 5, 1329--1340.

\bibitem{IJ} A. D. Ionescu, D. Jerison, \textit{On the absence of positive eigenvalues of Schr\"odinger operators with rough potentials}. Geom. Funct. Anal. \textbf{13} (2003), 1029--1081.

\bibitem{K} J. B. Keller, \textit{Lower bounds and isoperimetric inequalities for eigenvalues of the Schr\"odinger equation}. J. Math. Phys. \textbf{2} (1961), 262--266.

\bibitem{KRS} C. E. Kenig, A. Ruiz, C. D. Sogge, \textit{Uniform Sobolev inequalities and unique continuation for second order constant coefficient differential operators}. Duke Math. J. \textbf{55} (1987), 329--347.

\bibitem{KT} H. Koch, D. Tataru, \textit{Carleman estimates and absence of embedded eigenvalues}. Comm. Math. Phys. \textbf{267} (2006), no. 2, 419--449.

\bibitem{LS} A. Laptev, O. Safronov, \textit{Eigenvalue estimates for Schr\"odinger operators with complex potentials}. Comm. Math. Phys. \textbf{292} (2009), 29--54.

\bibitem{LT} E. H. Lieb, W. Thirring, \textit{Inequalities for the moments of the eigenvalues of the Schr\"odinger Hamiltonian and their relation to Sobolev inequalities}. Studies in Mathematical Physics. Princeton University Press, Princeton (1976), pp. 269--303.

\bibitem{Ma1} R. M. Martirosjan, \textit{On the spectrum of the non-selfadjoint operator $-\Delta u+cu$ in three dimensional space}. (Russian) Izv. Akad. Nauk Armyan. SSR. Ser. Fiz.-Mat. Nauk \textbf{10} (1957), no. 1, 85--111.

\bibitem{Ma2} R. M. Martirosjan, \textit{On the spectrum of various perturbations of the Laplace operator in spaces of three or more dimensions}. (Russian) Izv. Akad. Nauk SSSR Ser. Mat. \textbf{24} (1960), 897--920.

\bibitem{N} S. N. Naboko, \textit{On the dense point spectrum of Schr\"odinger and Dirac operators}. Teoret. Mat. Fiz. \textbf{68} (1986), no. 1, 18--28.

\bibitem{Na} M. A. Na{\u\i}mark, \textit{Investigation of the spectrum and the expansion in eigenfunctions of a nonselfadjoint operator of the second order on a semi-axis}. (Russian) Trudy Moskov. Mat. Ob\v s\v c. \textbf{3} (1954), 181--270. 

\bibitem{Pa1} B. S. Pavlov, \textit{On a non-selfadjoint Schr\"odinger operator}. (Russian) 1966 Probl. Math. Phys., No. I, Spectral Theory and Wave Processes (Russian) pp. 102--132 Izdat. Leningrad. Univ., Leningrad

\bibitem{Pa2} B. S. Pavlov, \textit{On a non-selfadjoint Schr\"odinger operator. II}. (Russian) 1967 Problems of Mathematical Physics, No. 2, Spectral Theory, Diffraction Problems (Russian) pp. 133--157 Izdat. Leningrad. Univ., Leningrad

\bibitem{Pa3} B. S. Pavlov, \textit{On a non-selfadjoint Schr\"odinger operator. III}. (Russian) Problems of mathematical physics, No. 3: Spectral theory (Russian), pp. 59--80. Izdat. Leningrad. Univ., Leningrad, 1968.

\bibitem{RS4} M. Reed, B. Simon, \textit{Methods of modern mathematical physics. IV. Analysis of operators}. Academic Press [Harcourt Brace Jovanovich Publishers], New York, 1978.

\bibitem{S} O. Safronov, \textit{Estimates for eigenvalues of the Schr\"odinger operator with a complex potential}. Bull. Lond. Math. Soc. \textbf{42} (2010), no. 3, 452--456.

\bibitem{Si} B. Simon, \textit{Some Schr\"odinger operators with dense point spectrum}. Proc. Amer. Math. Soc. \textbf{125} (1997), no. 1, 203--208.

\bibitem{St1} S. A. Stepin, \textit{On spectral components of the Schr\"odinger operator with a complex potential}. (Russian) Uspekhi Mat. Nauk \textbf{68} (2013), no. 1 (409), 199--200; translation in Russian Math. Surveys \textbf{68} (2013), no. 1, 186--188 .

\bibitem{St2} S. A. Stepin, \textit{Estimate for the number of eigenvalues of the nonselfadjoint Schr\"odinger operator}. (Russian) Dokl. Akad. Nauk \textbf{455} (2014), no. 4, 394--397; translation in Dokl. Math. \textbf{89} (2014), no. 2, 202--205.

\bibitem{St} R.~Strichartz, \textit{Restrictions of Fourier transforms to quadratic surfaces and decay of solutions of wave equations}. Duke Math. J. \textbf{44} (1977), 705--714.

\bibitem{T} E. C. Titchmarsh, \textit{Eigenfunction expansions associated with second-order differential equations, Part I}. Second Edition, Clarendon Press, Oxford 1962.

\bibitem{V} L. Vega, \textit{Restriction theorems and the Schr\"odinger multiplier on the torus}. In: Partial differential equations with minimal smoothness and applications (Chicago, IL, 1990), 199--211, IMA Vol. Math. Appl. \textbf{42}, Springer, New York, 1992.

\bibitem{WvN} J. von Neumann, E. P. Wigner, \textit{\"Uber merkw\"urdige diskrete Eigenwerte}. Z. Phys. \textbf{30} (1929), 465--467.

\bibitem{W} G. N. Watson, \textit{A treatise on the theory of Bessel functions}. Cambridge Univ. Press, Cambridge, UK, 1944.

\end{thebibliography}

\end{document}